%% LyX 2.0.0beta1 created this file.  For more info, see http://www.lyx.org/.
%% Do not edit unless you really know what you are doing.
\documentclass[11pt]{amsart}
\usepackage[T1]{fontenc}
\usepackage[latin9]{inputenc}
\usepackage{verbatim}
\usepackage{amsthm}
\usepackage{amstext}
\usepackage{amssymb}
\usepackage{esint}

\makeatletter
%%%%%%%%%%%%%%%%%%%%%%%%%%%%%% Textclass specific LaTeX commands.
\numberwithin{equation}{section}
\numberwithin{figure}{section}

%%%%%%%%%%%%%%%%%%%%%%%%%%%%%% User specified LaTeX commands.

%\usepackage{graphicx,xypic}
%\include{epsf}
\topmargin0in
\textheight8.5in
\oddsidemargin0.2in
\evensidemargin0.2in
\textwidth6in
\advance\hoffset by -0.5 truecm
\usepackage{amsthm}\usepackage{amstext}%\usepackage{babel}

%%%%%%%%%%%%%%%%%%%%%%%%%%%%%% LyX specific LaTeX commands.
\pdfpageheight\paperheight
\pdfpagewidth\paperwidth

%%%%%%%%%%%%%%%%%%%%%%%%%%%%%% Textclass specific LaTeX commands.
\numberwithin{equation}{section}
\numberwithin{figure}{section}
\theoremstyle{plain}
\newtheorem{thm}{Theorem}[section]
  \theoremstyle{plain}
  \newtheorem{assumption}[thm]{Assumption}
  \theoremstyle{plain}
  \newtheorem{lem}[thm]{Lemma}
\theoremstyle{plain}
  \newtheorem{prop}[thm]{Proposition}
  \theoremstyle{remark}
  \newtheorem{rem}[thm]{Remark}
  \theoremstyle{plain}
  \newtheorem{cor}[thm]{Corollary} 
 \theoremstyle{definition}
  \newtheorem{defn}[thm]{Definition}
  \theoremstyle{remark}
  \newtheorem*{acknowledgement*}{Acknowledgement}

%%%%%%%%%%%%%%%%%%%%%%%%%%%%%% User specified LaTeX commands.
\usepackage{times}%\usepackage{calrsfs}
\usepackage{bbold}%\usepackage[width=15cm]{geometry}

\makeatother

\begin{document}

\title{Index computations in Rabinowitz Floer homology}

\author{Will J. Merry and Gabriel P. Paternain}

\address{Department of Pure Mathematics and Mathematical Statistics, University
of Cambridge, Cambridge CB3 0WB, UK}

\email{\texttt{w.merry@dpmms.cam.ac.uk, g.p.paternain@dpmms.cam.ac.uk}}
\begin{abstract}
In this note we study two index questions. In the first we establish
the relationship between the Morse indices of the free time action
functional and the fixed time action functional. The second is related
to Rabinowitz Floer homology. Our index computations are based on
a correction term which is defined as follows: around a non-degenerate
Hamiltonian orbit lying in a fixed energy level a well-known theorem
says that one can find a whole cylinder of orbits parametrized by
the energy. The correction term is determined by whether the periods
of the orbits are increasing or decreasing as one moves up the orbit
cylinder. We also provide an example to show that, even above the
Ma\~n\'e critical value, the periods may be increasing thus producing
a jump in the Morse index of the free time action functional in relation
to the Morse index of the fixed time action functional. 
\end{abstract}
\maketitle

\centerline{\it Dedicated to Richard S. Palais on the occasion of his 80th
birthday.}

\section{Introduction}

Let $(M,g)$ denote a closed connected orientable Riemannian manifold
with cotangent bundle $\pi:T^{*}M\rightarrow M$. Let $\omega_{0}=d\lambda_{0}$
denote the canonical symplectic form $dp\wedge dq$ on $T^{*}M$,
where $\lambda_{0}$ is the Liouville $1$-form. Let $\widetilde{M}$
denote the universal cover of $M$. Let $\sigma\in\Omega^{2}(M)$
denote a closed \textbf{weakly exact} $2$-form, by this we mean that
the pullback $\widetilde{\sigma}\in\Omega^{2}(\widetilde{M})$ is
exact. We assume in addition that $\widetilde{\sigma}$ admits a \textbf{bounded}
primitive. This means that there exists $\theta\in\Omega^{1}(\widetilde{M})$
with $d\theta=\widetilde{\sigma}$, and such that \[
\left\Vert \theta\right\Vert _{\infty}:=\sup_{q\in\widetilde{M}}\left|\theta_{q}\right|<\infty,\]
 where $\left|\cdot\right|$ denotes the lift of the metric $g$ to
$\widetilde{M}$. Let \[
\omega:=\omega_{0}+\pi^{*}\sigma\]
denote the \textbf{twisted symplectic form}\emph{ }determined by $\sigma$.
We call the symplectic manifold $(T^{*}M,\omega)$ a \textbf{twisted
cotangent bundle}.{ In fact, for the purposes of this paper the assumptions
that $\sigma$ is weakly exact and admits a bounded primitive are
not strictly necessary; see Section \ref{sub:Dropping-the-assumption}
below.}

Let us fix once and for all a \textbf{Tonelli Hamiltonian }$H\in C^{\infty}(T^{*}M,\mathbb{R})$.
Here we recall that the classical Tonelli assumption means that $H$
is \textbf{fibrewise strictly convex }and \textbf{superlinear}. In
other words, the Hessian $\nabla^{2}H$ of $H$ restricted to each
tangent space $T_{q}^{*}M$ is positive definite, and \[
\lim_{\left|p\right|\rightarrow\infty}\frac{H(q,p)}{\left|p\right|}=\infty\]
uniformly for $q\in M$. Let $X_{H}$ denote the symplectic gradient
of $H$ with respect to $\omega$, and let $\phi_{t}:T^{*}M\rightarrow T^{*}M$
denote the flow of $X_{H}$. Denote by $L\in C^{\infty}(TM,\mathbb{R})$
the \textbf{Fenchel dual Lagrangian}. This is the unique Lagrangian
$L$ on $TM$ defined by \[
L(q,v):=\max_{p\in T_{q}^{*}M}\{p(v)-H(q,p)\}.\]
The Lagrangian $L$ is also fibrewise strictly convex and superlinear. 

\begin{assumption} \emph{\label{ass:nondegneracy}Fix now once and
for all a regular value $k$ of $H$, and denote by $\Sigma:=H^{-1}(k)$.
Thus $\Sigma$ is a closed hypersurface in $T^{*}M$. Since $H$ is
autonomous, $\phi_{t}$ preserves $\Sigma$. We shall consider orbits
$y:\mathbb{R}/T\mathbb{Z}\rightarrow\Sigma$ of $X_{H}$ that admit
}\textbf{\emph{non-degenerate orbit cylinders}}\emph{.} \end{assumption}\emph{ }

The precise definition of a non-degenerate orbit cylinder is given
below in Definition \ref{orbit cylinder}, but intutively we ask that
$y$ can be included in a whole family of orbits parametrized by energy,
with the property that the periods are either increasing or decreasing
(i.e. not constant) as one moves up the orbit cylinder. The key setting
we have in mind where this condition is satisfied is when the orbit
$y$ is\textbf{ strongly non-degenerate}, by which we mean that the
\textbf{nullity}\emph{ }of $y$, $\nu(y)$ satisfies\begin{equation}
\nu(y):=\dim\,\ker(d_{y(0)}\phi_{T}-\mathbb{1})=1.\label{eq:snd}\end{equation}
In this case $y$ admits a unique non-degenerate orbit cylinder (cf.
the discussion after Definition \ref{orbit cylinder} below). For
an explanation as to the terminology {}``strongly non-degenerate''
see Remark \ref{rem:A-remark-on terminiology} below.\newline

In this note we address two index questions from \cite{Merry2010a},
which are both related to the \textbf{Rabinowitz Floer homology} $RFH_{*}(\Sigma,T^{*}M)$
of the hypersurface $\Sigma$ (although both questions are valid in
situations where the Rabinowitz Floer homology has not yet been defined,
see Section \ref{sub:Dropping-the-assumption}).

\subsection{The Morse index of the free time action functional}

$\ $\vspace{6pt}

The first circle of ideas we study here is only indirectly related
to Rabinowitz Floer homology \cite{AbbondandoloSchwarz2009,Merry2010a}.
We begin with some notation. Denote by $\Lambda M$ the completion
of $C^{\infty}(S^{1},M)$ with respect to the Sobolev $W^{1,2}$-norm.
The set $\Lambda M$ carries the structure of a Hilbert manifold.
Given a free homotopy class $\alpha\in[S^{1},M]$, let $\Lambda_{\alpha}M$
denote the component of $\Lambda M$ consisting of loops $q:S^{1}\rightarrow M$
such that $[q]=\alpha$, and fix reference loops $q_{\alpha}\in\Lambda_{\alpha}M$.
Given $q\in\Lambda_{\alpha}M$, let $\bar{q}\in C^{0}(S^{1}\times[0,1],M)\cap W^{1,2}(S^{1}\times[0,1],M)$
denote any map such that $\bar{q}(t,0)=q(t)$ and $\bar{q}(t,1)=q_{\alpha}(t)$.
Then it is proved in \cite[p194]{Merry2010} that since we assume
that $\sigma$ is weakly exact and that $\widetilde{\sigma}$ admits
a bounded primitive, the value of $\int_{S^{1}\times[0,1]}\bar{q}^{*}\sigma$
is independent of the choice of $\bar{q}$.

This allows us to define the \textbf{free time action functional}
$\mathcal{S}_{k}$ on the product manifold $\Lambda M\times\mathbb{R}^{+}$:\[
\mathcal{S}_{k}:\Lambda M\times\mathbb{R}^{+}\rightarrow\mathbb{R};\]
 \[
\mathcal{S}_{k}(q,T):=T\int_{S^{1}}\left(L\left(q,\frac{\dot{q}}{T}\right)+k\right)dt+\int_{S^{1}\times[0,1]}\bar{q}^{*}\sigma.\]
 Denote by $\mbox{Crit}(\mathcal{S}_{k})$ the set of critical points
of $\mathcal{S}_{k}$. A pair $(q,T)$ is a critical point of $\mathcal{S}_{k}$
if and only if the curve $\gamma:[0,T]\rightarrow M$ defined by $\gamma(t):=q(t/T)$
is the projection to $M$ of a closed orbit of $\phi_{t}$ contained
in $\Sigma$ \cite[Corollary 2.3]{Merry2010}.

We can also fix the period of the free time action functional, thus
giving the \textbf{fixed period action functional}:\[
\mathcal{S}_{k}^{T}:\Lambda M\rightarrow\mathbb{R};\]
 \[
\mathcal{S}_{k}^{T}(q):=\mathcal{S}_{k}(q,T).\]
 Let $\left\langle \left\langle \cdot,\cdot\right\rangle \right\rangle $
denote the $W^{1,2}$ metric on $\Lambda M$ defined by \[
\left\langle \left\langle \zeta,\zeta'\right\rangle \right\rangle :=\int_{0}^{1}\left\langle \zeta,\zeta'\right\rangle +\left\langle \nabla_{t}\zeta,\nabla_{t}\zeta'\right\rangle dt,\]
where $\nabla$ is the Levi-Civita connection of $(M,g)$. We will
use the same notation for the product metric on $\Lambda M\times\mathbb{R}^{+}$:\[
\left\langle \left\langle (\zeta,b),(\zeta',b')\right\rangle \right\rangle :=\left\langle \left\langle \zeta,\zeta'\right\rangle \right\rangle +bb'.\]
 We denote by $\nabla\mathcal{S}_{k}$ and $\nabla\mathcal{S}_{k}^{T}$
the gradient of $\mathcal{S}_{k}$ and $\mathcal{S}_{k}^{T}$ with
respect to these metrics.

The \textbf{Morse index}\emph{ }$i(q,T)$ of a critical point $(q,T)\in\mbox{Crit}(\mathcal{S}_{k})$
is the maximal dimension of a subspace $W\subseteq W^{1,2}(S^{1},q^{*}TM)\times\mathbb{R}$
on which $d_{(q,T)}^{2}\mathcal{S}_{k}(\cdot,\cdot)$ is negative
definite. It is well known that for the Tonelli Lagrangians $L$ we
are working with the Morse index $i(q,T)$ is always finite. Similarly
let $i_{T}(q)$ denote the Morse index\textbf{ }of a critical point
$q\in\mbox{Crit}(\mathcal{S}_{k}^{T})$, that is, the dimension of
a maximal subspace of $W^{1,2}(S^{1},q^{*}TM)$ on which $d_{q}^{2}\mathcal{S}_{k}^{T}(\cdot,\cdot)$
is negative definite.

Note that \[
d_{q}\mathcal{S}_{k}^{T}(\zeta)=d_{(q,T)}\mathcal{S}_{k}(\zeta,0).\]
 Thus if $(q,T)\in\mbox{Crit}(\mathcal{S}_{k})$ then $q\in\mbox{Crit}(\mathcal{S}_{k}^{T})$.
It is therefore a natural question to ask how the two Morse indices
$i(q,T)$ and $i_{T}(q)$ are related for $(q,T)\in\mbox{Crit}(\mathcal{S}_{k})$.
It is clear that $0\leq i(q,T)-i_{T}(q)\leq1$, but the precise relationship
is somewhat more complicated, as we now explain.

\begin{defn}We say that a critical point $(q,T)\in\mbox{Crit}(\mathcal{S}_{k})$
admits an \textbf{orbit cylinder }if there exists $\varepsilon>0$
and a smooth (in $s$) family $\{(q_{k+s},T(k+s)\}_{s\in(-\varepsilon,\varepsilon)}$
of critical points of $\mathcal{S}_{k+s}$: \[
(q_{k+s},T(k+s))\in\mbox{Crit}(\mathcal{S}_{k+s}),\]
 where $(q_{k},T(k))=(q,T)$. We say that the orbit cylinder is \textbf{non-degenerate
}if $T'(k)\ne0$. In this case we define the \textbf{correction term}
\begin{equation}
\chi(q,T):=\mbox{sign}(-T'(k))\in\{-1,1\}.\label{eq:chi q T}\end{equation}

A priori, the correction term $\chi(q,T)$ depends on the choice of
orbit cylinder. However one consequence of Theorem \ref{thm:lag index-1}
below is that this is not the case.\end{defn}

This condition is the analogue of Assumption \ref{ass:nondegneracy}
in the Lagrangian setting. A sufficient condition for a critical point
$(q,T)$ to admit a non-degenerate orbit cylinder is that the corresponding
periodic orbit $y(t):=x(t/T)$ of $X_{H}$ (see Lemma \ref{lem:critical points}
below) is strongly non-degenerate. In this case the orbit cylinder
is actually \textbf{unique} (cf. the discussion surrounding \eqref{eq:matrix}
below).\newline 

Anyway, the precise relationship between the two Morse indices is
given by the following result.

\begin{thm} \label{thm:lag index-1}Let $(q,T)\in\mbox{\emph{Crit}}(\mathcal{S}_{k})$
denote a critical point admitting a non-degenerate orbit cylinder.
Then \[
i(q,T)=i_{T}(q)+\frac{1}{2}-\frac{1}{2}\chi(q,T).\]

\end{thm}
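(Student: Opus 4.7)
The plan is to compare the Hessian $H := d^2_{(q,T)} \mathcal{S}_k$, regarded as a symmetric bilinear form on $V \oplus \mathbb{R}$ with $V := W^{1,2}(S^1, q^*TM)$, against its restriction to $V$, which is precisely $d^2_q \mathcal{S}_k^T$. Since the codimension is one, $i(q,T) - i_T(q) \in \{0,1\}$ automatically, and the task is to decide which case occurs. The strategy is to extract from the non-degenerate orbit cylinder a distinguished vector $(\xi, b_0) \in V \oplus \mathbb{R}$ that is $H$-orthogonal to $V$ and on which $H$ takes the value $-T'(k)$; additivity of the Morse index under $H$-orthogonal direct sums then yields the formula by a one-line sign computation.

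The decisive observation is the trivial identity $\mathcal{S}_{k+s}(q,T) = \mathcal{S}_k(q,T) + sT$, which gives the 1-form identity $d\mathcal{S}_{k+s} = d\mathcal{S}_k + s\,dT$ on $\Lambda M \times \mathbb{R}^+$, where $dT$ denotes the differential of the projection to the second factor. Writing $(q_s, T_s) := (q_{k+s}, T(k+s))$ and using that $(q_s, T_s) \in \mathrm{Crit}(\mathcal{S}_{k+s})$, this becomes
$$d\mathcal{S}_k|_{(q_s, T_s)}(\zeta, b) = -sb \quad \text{for all } (\zeta, b) \in V \oplus \mathbb{R}.$$
Setting $\xi := \partial_s q_s|_{s=0}$ and $b_0 := T'(k)$, differentiating in $s$ at $s=0$, and noting that $(q,T)$ is itself a critical point of $\mathcal{S}_k$ (which kills the connection-dependent term coming from parallel-transporting $(\zeta,b)$ along the curve), I would obtain
$$H\bigl((\xi, b_0), (\zeta, b)\bigr) = -b \quad \text{for every } (\zeta, b) \in V \oplus \mathbb{R}.$$
Specialising $b = 0$ shows $(\xi, b_0)$ is $H$-orthogonal to $V$; specialising $(\zeta, b) = (\xi, b_0)$ gives $H((\xi, b_0), (\xi, b_0)) = -b_0 = -T'(k)$.

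Non-degeneracy of the orbit cylinder forces $b_0 \neq 0$, so $V \oplus \mathbb{R}(\xi, b_0)$ is a genuine direct-sum decomposition of $V \oplus \mathbb{R}$. In this basis $H$ is block diagonal with blocks $H|_V = d^2_q \mathcal{S}_k^T$ and the one-dimensional form $-T'(k)$, so the index is additive:
$$i(q,T) = i_T(q) + \begin{cases} 1 & T'(k) > 0, \\ 0 & T'(k) < 0. \end{cases}$$
Since $\chi(q,T) = \mathrm{sign}(-T'(k))$, this extra term equals $\tfrac{1}{2} - \tfrac{1}{2}\chi(q,T)$, yielding the desired formula; as a free corollary, $\chi(q,T)$ is thereby shown to be independent of the choice of non-degenerate orbit cylinder. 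The only (mild) obstacle I foresee is the rigorous justification of the differentiation step in the Hilbert manifold setting, but this is routine once the critical point identity at $s=0$ is invoked to eliminate the connection-dependent terms.
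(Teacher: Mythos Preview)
Your proposal is correct and follows essentially the same argument as the paper: both differentiate the critical point equation for $\mathcal{S}_{k+s}$ along the orbit cylinder to obtain $d^2_{(q,T)}\mathcal{S}_k((\xi,T'(k)),(\zeta,b))=-b$, then use that $(\xi,T'(k))$ is $H$-orthogonal to $V\times\{0\}$ with self-pairing $-T'(k)$ to read off the index jump. The only cosmetic differences are that the paper localizes to $M=\mathbb{R}^n$ to sidestep the connection issue you flag, and phrases the final step via the spectral decomposition $E^+\oplus\ker\oplus E^-$ rather than your ``additivity under $H$-orthogonal sum''; your use of the identity $\mathcal{S}_{k+s}=\mathcal{S}_k+sT$ is exactly what the paper's variation computation \eqref{eq:why it's b} amounts to.
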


\subsection{Computing the virtual dimension of the moduli spaces of Rabinowitz
Floer homology}

$\ $\vspace{6pt}

The second functional we study is more directly related to Rabinowitz
Floer homology. Denote by $\Lambda T^{*}M$ the completion of $C^{\infty}(S^{1},T^{*}M)$
with respect to the Sobolev $W^{1,2}$-norm. The \textbf{Rabinowitz
action functional $\mathcal{A}_{k}$} is defined on the product manifold
$\Lambda T^{*}M\times\mathbb{R}$:

\[
\mathcal{A}_{k}:\Lambda T^{*}M\times\mathbb{R}\rightarrow\mathbb{R};\]
 \[
\mathcal{A}_{k}(x,\eta):=\int_{S^{1}}x^{*}\lambda_{0}+\int_{S^{1}\times[0,1]}\bar{q}^{*}\sigma-\eta\int_{S^{1}}(H(x)-k)dt,\]
 where $q:=\pi\circ x$, and $\bar{q}:S^{1}\times[0,1]\rightarrow M$
is defined as before. The critical points of $\mathcal{A}_{k}$ are
easily seen to satisfy:\[
\dot{x}=\eta X_{H}(x);\]
 \[
\int_{S^{1}}(H(x)-k)dt=0.\]
 Since $H$ is invariant under its Hamiltonian flow, the second equation
implies\[
x(S^{1})\subseteq\Sigma.\]
 Thus if $\mbox{Crit}(\mathcal{A}_{k})$ denotes the set of critical
points of $\mathcal{A}_{k}$, we can characterize $\mbox{Crit}(\mathcal{A}_{k})$
by\begin{eqnarray}
\mbox{Crit}(\mathcal{A}_{k}) & = & \left\{ (x,\eta)\in\Lambda T^{*}M\times\mathbb{R}\,:\, x\in C^{\infty}(S^{1},T^{*}M)\right.\label{eq:critA}\\
 &  & \left.\ \ \ \dot{x}=\eta X_{H}(x),\ x(S^{1})\subseteq\Sigma\right\} .\nonumber 
\end{eqnarray}
 The critical points of $\mathcal{A}_{k}$ with $\eta>0$ correspond
bijectively to the orbits of $\phi_{t}$ contained in $\Sigma$: if
$(x,\eta)\in\mbox{Crit}(\mathcal{A}_{k})$ with $\eta>0$ and $y:\mathbb{R}/\eta\mathbb{Z}\rightarrow\Sigma$
is defined by $y(t):=x(t/\eta)$ then $y(t)=\phi_{t}(y(0))$. Thus
the critical points of $\mathcal{A}_{k}$ are intimately related to
those of $\mathcal{S}_{k}$. The following lemma \cite[Lemma 4.1]{Merry2010a}
makes this precise. Denote by $\mathfrak{L}:TM\rightarrow T^{*}M$
the \textbf{Legendre transform }of $L$, defined by \begin{equation}
\mathfrak{L}(q,v):=\left(q,\frac{\partial L}{\partial v}(q,v)\right).\label{eq:legendre transform}\end{equation}

\begin{lem} \label{lem:critical points}Given $(q,T)\in\mbox{\emph{Crit}}(\mathcal{S}_{k})$,
define \[
x(t):=(q(t),\mathfrak{L}(q(t),\dot{q}(t))),\ \ \ x^{-}(t):=x(-t).\]
 Then\[
\mbox{\emph{Crit}}(\mathcal{A}_{k})=\left\{ (x,T),(x^{-},-T)\,:\,(q,T)\in\mbox{\emph{Crit}}(\mathcal{S}_{k})\right\} \cup\left\{ (x,0)\in\Sigma\times\{0\}\right\} .\]

\end{lem}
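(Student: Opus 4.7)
The plan is to exploit the characterization of $\mbox{Crit}(\mathcal{A}_{k})$ already derived in \eqref{eq:critA}, namely that $(x,\eta)$ is critical iff $x$ is smooth, $\dot{x}=\eta X_{H}(x)$, and $x(S^{1})\subseteq\Sigma$. I will split the argument into three cases according to the sign of $\eta$ and match each case with the appropriate piece of the claimed decomposition.

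\textbf{Case $\eta=0$.} The equation $\dot{x}=0$ forces $x$ to be a constant loop, and the constraint $x(S^{1})\subseteq\Sigma$ then says exactly that this constant lies in $\Sigma$. This produces the piece $\{(x,0)\in\Sigma\times\{0\}\}$.

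\textbf{Case $\eta>0$.} Given such a critical point, rescale time by setting $y(t):=x(t/\eta)$ for $t\in\mathbb{R}/\eta\mathbb{Z}$; then $\dot{y}=X_{H}(y)$ and $y$ traces out a closed Hamiltonian orbit on $\Sigma$ of period $T:=\eta$. Projecting via $\pi$ and applying the inverse Legendre transform $\mathfrak{L}^{-1}$, one obtains a loop $\gamma(t)=\pi(y(t))$ satisfying the Euler--Lagrange equation for $L$ on the level $\{H=k\}$, and then the reparametrization $q(s):=\gamma(sT)$ for $s\in S^{1}$ belongs to $\Lambda M$. By the characterization of $\mbox{Crit}(\mathcal{S}_{k})$ recalled after the definition of $\mathcal{S}_{k}$, the pair $(q,T)$ lies in $\mbox{Crit}(\mathcal{S}_{k})$, and by construction $x(t)=\mathfrak{L}(q(t),\dot{q}(t))$, giving the summand $(x,T)$. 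Conversely, starting from $(q,T)\in\mbox{Crit}(\mathcal{S}_{k})$ and running these identifications backwards, one verifies that $(x,T)$ solves $\dot{x}=TX_{H}(x)$ with $x(S^{1})\subseteq\Sigma$, so it lies in $\mbox{Crit}(\mathcal{A}_{k})$.

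\textbf{Case $\eta<0$.} Here I would reduce to the positive case by the involution $x\mapsto x^{-}$, $x^{-}(t):=x(-t)$. A direct computation shows that $(x,\eta)\in\mbox{Crit}(\mathcal{A}_{k})$ with $\eta<0$ if and only if $(x^{-},-\eta)\in\mbox{Crit}(\mathcal{A}_{k})$ with $-\eta>0$ (the minus sign in $\dot{x}^{-}(t)=-\dot{x}(-t)$ compensates for the sign of $\eta$, and $\Sigma$-invariance is preserved). By the previous case $(x^{-},-\eta)=(x_{0},T)$ for some $(q,T)\in\mbox{Crit}(\mathcal{S}_{k})$ with $x_{0}=\mathfrak{L}(q,\dot{q})$, and reversing again gives $(x,\eta)=(x_{0}^{-},-T)$, yielding the remaining summand.

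The main technical point --- and the only place that is more than bookkeeping --- is the $\eta>0$ case, specifically verifying that the Legendre transform intertwines $\mbox{Crit}(\mathcal{A}_{k})$ and $\mbox{Crit}(\mathcal{S}_{k})$ after the time rescaling $t\mapsto t/T$. This amounts to the standard fact that $y$ is an orbit of $X_{H}$ on $\{H=k\}$ iff $\pi(y)$ solves the Euler--Lagrange equation for $L+k$ with $\mathfrak{L}(\pi(y),\tfrac{d}{dt}\pi(y))=y$; the $\sigma$-term in $\mathcal{S}_{k}$ and the $\lambda_{0}$-plus-$\sigma$ term in $\mathcal{A}_{k}$ are treated uniformly by the weak-exactness hypothesis on $\sigma$, ensuring both functionals are well-defined on each component of loop space and differ only by the Fenchel--Legendre identity $p\,\dot{q}-H(q,p)=L(q,\dot{q})$ when $p=\mathfrak{L}(q,\dot{q})$.
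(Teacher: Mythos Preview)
The paper does not actually prove this lemma; it is stated with a citation to \cite[Lemma 4.1]{Merry2010a} and no argument is given in the present paper. Your proof is correct and is precisely the standard argument: use the characterization \eqref{eq:critA}, split on the sign of $\eta$, and for $\eta\ne0$ pass between the Hamiltonian and Lagrangian pictures via the Legendre transform and the time rescaling $t\mapsto t/|\eta|$, invoking the description of $\mbox{Crit}(\mathcal{S}_{k})$ stated immediately after the definition of $\mathcal{S}_{k}$ (itself cited from \cite[Corollary 2.3]{Merry2010}). There is nothing to compare against here beyond noting that your sketch matches what the cited reference does.

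One small notational point worth tidying: when you write $x(t)=\mathfrak{L}(q(t),\dot{q}(t))$, bear in mind that $q$ is parametrized on $S^{1}$ while the physical Euler--Lagrange solution is $\gamma(s)=q(s/T)$ on $[0,T]$, so the velocity entering the Legendre transform is $\dot\gamma=\dot q/T$ rather than $\dot q$ itself. The paper's own statement of the lemma is loose on exactly this point, so you are simply following its notation; just be aware of the rescaling if you ever need the formula explicitly.
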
 

We now explain precisely our standing non-degeneracy assumption. 

\begin{defn}\label{orbit cylinder}Given an orbit $y:\mathbb{R}/T\mathbb{Z}\rightarrow\Sigma$
of $X_{H}$, we say that $y$ admits an \textbf{orbit cylinder }if
there exists $\varepsilon>0$ together with a smooth (in $s$) family
$\mathcal{O}=(y_{k+s})_{s\in(-\varepsilon,\varepsilon)}$ of orbits
of $X_{H}$ \[
y_{k+s}:\mathbb{R}/T(k+s)\mathbb{Z}\rightarrow T^{*}M;\]
\[
H(y_{k+s})\equiv k+s,\]
with $y_{k}=y$. We say that the orbit cylinder is \textbf{non-degenerate
}if $T'(k)\ne0$. We define the \textbf{correction term }associated
to an orbit admitting a non-degenerate orbit cylinder by \[
\chi(y):=\mbox{sign}(-T'(k)).\]
A priori, the correction term $\chi(y)$ depends on the choice of
the orbit cylinder. However it follows from Theorem \ref{thm:lag index-1}
and Lemma \ref{lem:critical points} that this is actually not the
case.\end{defn}

A sufficient condition for an orbit $y$ to admit an orbit cylinder
is that $y$ is \textbf{weakly non-degenerate}, by which we mean that
$y$ has exactly two \textbf{Floquet multipliers }equal to one (see
for instance \cite[Proposition 4.2]{HoferZehnder1994}). In this case
the orbit cylinder $\mathcal{O}$ is \textbf{unique}.\textbf{ }If
we assume in addition that $y$ is \textbf{strongly non-degenerate
}(i.e. \eqref{eq:snd} holds) then the orbit cylinder $\mathcal{O}$
is non-degenerate. Indeed, let $N$ denote a hypersurface inside of
$\Sigma$ which is transverse to $y(\mathbb{R}/T\mathbb{Z})$ at the
point $y(0)$ with $T_{y(0)}N$ equal to the symplectic orthogonal
$(T_{y(0)}\mathcal{O})^{\perp}$ of the tangent space to the orbit
cylinder at $y(0)$. Let $P_{y}:\mathcal{U}\rightarrow\mathcal{V}$
denote the associated \textbf{Poincar\textbf{\'e} map}, where $\mathcal{U}$
and $\mathcal{V}$ are neighborhoods of $y(0)$. $P$ is a diffeomorphism
that fixes $y(0)$. Then there exists a unique symplectic splitting
of $T_{y(0)}T^{*}M$ such that $d_{y(0)}\phi_{T}$ is given by\begin{equation}
d_{y(0)}\phi_{T}=\left(\begin{array}{ccc}
1 & -T'(k) & 0\\
0 & 1 & 0\\
0 & 0 & \begin{array}{ccc}
\\
 & d_{y(0)}P_{y}\\
\\
\end{array}
\end{array}\right).\label{eq:matrix}\end{equation}
 Here $\mathbb{1}-d_{y(0)}P_{z}$ is invertible. The assumption that
$\nu(y)=1$ therefore implies that $T'(k)\ne0$. 

\begin{rem} \label{rem:A-remark-on terminiology}A perhaps more standard
name for {}``strongly non-degenerate'' is \textbf{transversally
non-degenerate}. Unfortunately\textbf{ }it seems that the term {}``transversally
non-degenerate''\textbf{ }has been used in the literature to mean
both {}``strongly non-degenerate'' and {}``weakly non degenerate''.
We prefer to clearly differentiate the two conditions, as our results
are only valid under the stronger one. \end{rem} 

Now suppose $(x,\eta)\in\mbox{Crit}(\mathcal{A}_{k})$ with $\eta\ne0$.
Let $y:\mathbb{R}/\left|\eta\right|\mathbb{Z}\rightarrow\Sigma$ be
defined by $y(t):=x(t/\left|\eta\right|)$. By an abuse of language,
we will say that $(x,\eta)$ admits a non-degenerate orbit cylinder
if same is true for the orbit $y$. In this case we may define the
correction term \[
\chi(x,\eta):=\mbox{sign}(\eta)\chi(y).\]
 Note that if $(q,T)\in\mbox{Crit}(\mathcal{S}_{k})$ admits a non-degenerate
orbit cylinder and $x,x^{-}\in\Lambda T^{*}M$ are defined as in Lemma
\ref{lem:critical points} then both the critical points $(x,T)$
and $(x^{-},-T)$ of $\mathcal{A}_{k}$ admit non-degenerate orbit
cylinders, and

\begin{equation}
\chi(q,T)=\chi(x,T)=-\chi(x^{-},-T).\label{eq:equality of correction terms}\end{equation}

In the next definition we restrict to strongly non-degenerate critical
points.

\begin{defn}\label{defn:mu} Given a critical point $(x,\eta)\in\mbox{Crit}(\mathcal{A}_{k})$
admitting a non-degenerate orbit cylinder, we define an index $\mu_{\textrm{Rab}}(x,\eta)\in\mathbb{Z}$
by \[
\mu_{\textrm{Rab}}(x,\eta):=\mu_{\textrm{CZ}}(y)-\frac{1}{2}\chi(x,\eta)\]
 Here $y:\mathbb{R}/\left|\eta\right|\mathbb{Z}\rightarrow\Sigma$
is defined as before by $y(t):=x(t/\left|\eta\right|)$, and $\mu_{\textrm{CZ}}(y)\in\left(\frac{1}{2}\mathbb{Z}\right)\backslash\mathbb{Z}$
denotes the \textbf{Conley-Zehnder index}\emph{ }of $y$. See \cite{RobbinSalamon1993}
for the definition of the Conley-Zehnder index in the case where there
exist Floquet multipliers of $y$ that are equal to $1$ (note however
that our sign conventions match those of \cite{AbbondandoloPortaluriSchwarz2008}
not \cite{RobbinSalamon1993}).\end{defn} 

In fact, the index $\mu_{\textrm{Rab}}(x,\eta)$ can be identified
with a suitably defined \textbf{transverse Conley-Zehnder} index,
and thus can be defined even when the orbit $(x,\eta)$ does not admit
a non-degenerate orbit cylinder. See Subsection \ref{subsection:mu}.
In fact, this identification proves the following result:

\begin{prop}\label{no depend}The index $\mu_{\textrm{\emph{Rab}}}$
only depends on the hypersurface $\Sigma=H^{-1}(k)$ and not on the
actual Hamiltonian defining it. \end{prop}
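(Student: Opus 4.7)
The plan is to identify the index $\mu_{\textrm{Rab}}(x,\eta)$ with a \emph{transverse Conley--Zehnder index} $\mu_{\textrm{trans}}(y)$ built purely from the characteristic foliation of $\Sigma$, and therefore manifestly independent of the choice of defining Hamiltonian $H$. Given a closed characteristic $y:\mathbb{R}/T\mathbb{Z}\to\Sigma$, one trivializes the symplectic quotient bundle $(T\Sigma/\mathbb{R}\langle X_{H}\rangle)|_{y}$ and considers the resulting $(2n-2)$-dimensional path of symplectic matrices obtained from the linearized Poincar\'e return map of $y$. Its Robbin--Salamon index $\mu_{\textrm{trans}}(y)$ depends only on the oriented characteristic line field on $\Sigma$: replacing $H$ by any other Hamiltonian having $\Sigma$ as a regular level set merely reparametrizes the orbit but leaves the transverse linearized dynamics (and the relevant trivialization class) intact.

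The next step is to prove the decomposition
\[
\mu_{\textrm{CZ}}(y) \;=\; \tfrac{1}{2}\chi(y) + \mu_{\textrm{trans}}(y).
\]
Using the symplectic splitting of $T_{y(0)}T^{*}M$ provided by the non-degenerate orbit cylinder in \eqref{eq:matrix}, I would homotope the full path $\{d_{y(0)}\phi_{t}\}_{t\in[0,T]}$, relative to endpoints, to a symplectic direct sum of two pieces: a $2$-dimensional path of symplectic shears ending at the upper-triangular matrix with off-diagonal entry $-T'(k)$, and a $(2n-2)$-dimensional path in $\mathrm{Sp}(2n-2)$ whose endpoint is $d_{y(0)}P_{y}$. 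By additivity of the Robbin--Salamon index under symplectic direct sums, this splits $\mu_{\textrm{CZ}}(y)$ into the contribution of the shear block, which a direct computation shows equals $\tfrac{1}{2}\mathrm{sign}(-T'(k)) = \tfrac{1}{2}\chi(y)$, plus $\mu_{\textrm{trans}}(y)$.

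Combining this decomposition with the definition $\mu_{\textrm{Rab}}(x,\eta) = \mu_{\textrm{CZ}}(y) - \tfrac{1}{2}\chi(x,\eta)$ and the relation $\chi(x,\eta)=\mathrm{sign}(\eta)\chi(y)$, one obtains $\mu_{\textrm{Rab}}(x,\eta) = \mu_{\textrm{trans}}(y)$ for $\eta>0$. The case $\eta<0$ follows by applying the same argument to the time-reversed orbit $x^{-}$ and invoking \eqref{eq:equality of correction terms}. Since the right-hand side depends only on $\Sigma$, so does $\mu_{\textrm{Rab}}$.

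The main obstacle lies in the homotopy used in the decomposition: the linearized flow $d\phi_{t}$ does not preserve the splitting pointwise, only the endpoint matrix is in block form. Justifying the homotopy to a symplectically split path requires exploiting the orbit cylinder $\mathcal{O}$ to produce a genuine $2$-dimensional symplectic subbundle along $y$ spanned by $X_{H}(y(t))$ and the infinitesimal deformation $\partial_{s}y_{k+s}(t)|_{s=0}$, whose symplectic orthogonal along $y$ then carries the transverse dynamics in a canonical way. A careful sign-convention check is then needed to confirm that the Robbin--Salamon index of the shear block is exactly $\tfrac{1}{2}\chi(y)$ rather than its negative, and that the conventions of \cite{AbbondandoloPortaluriSchwarz2008} (as used in Definition \ref{defn:mu}) are being applied consistently; this will pin the entire computation down.
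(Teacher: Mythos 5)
Your overall strategy matches the paper's: identify $\mu_{\textrm{Rab}}$ with a transverse Conley--Zehnder index $\mu_{\textrm{CZ}}^{\tau}(y)$ built from a trivialization of the symplectic quotient $\mathcal{Q} = T\Sigma/\mathcal{L}$, observe that this index depends only on the oriented characteristic line field, and compute the difference with $\mu_{\textrm{CZ}}(y)$ as the contribution $\tfrac{1}{2}\mbox{sign}(-T'(k))$ coming from the $2$-dimensional shear block produced by the orbit cylinder, via the product axiom of the Maslov index. This is precisely the content of the paper's \S\ref{subsection:mu}.

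However, there is a genuine gap in your proposed decomposition
\[
\mu_{\textrm{CZ}}(y) = \tfrac{1}{2}\chi(y) + \mu_{\textrm{trans}}(y).
\]
The issue is the choice of trivialization of the full bundle $y^{*}TT^{*}M$. The index $\mu_{\textrm{CZ}}(y)$ in Definition \ref{defn:mu} is computed relative to a \emph{vertical-preserving} trivialization $\Omega$, i.e.\ one with $\Omega(t)(V^{n}) = T^{v}_{y(t)}T^{*}M$. But your homotopy argument computes the Maslov index of $\{d_{y(0)}\phi_{t}\}$ in the trivialization $\Phi\oplus\Psi$ adapted to the orbit cylinder and the characteristic line field, for which $(\Phi\oplus\Psi)(t)(V^{n}) = (T_{y(t)}\Sigma\cap T^{v}_{y(t)}T^{*}M)\oplus\mathcal{L}_{y(t)}$. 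These two Lagrangian subbundles do not coincide in general, so the resulting Maslov indices may differ by a loop contribution $2m(\Theta^{-1}\circ\Omega)$; this integer depends only on $\Sigma$ and the free homotopy class of $y$, and defines a class $c(\Sigma)\in H^{1}(\Sigma,\mathbb{Z})$. What your homotopy actually produces is
\[
\widetilde{\mu}_{\textrm{CZ}}(y) = \tfrac{1}{2}\mbox{sign}(-T'(k)) + \mu_{\textrm{CZ}}^{\tau}(y),
\]
with $\widetilde{\mu}_{\textrm{CZ}}(y) = \mu_{\textrm{CZ}}(y) + c(\Sigma)([y])$. For the Proposition itself this is still enough, since $c(\Sigma)([y])$ too depends only on $\Sigma$ and not on $H$; but the clean identification $\mu_{\textrm{Rab}}(x,\eta) = \mu_{\textrm{trans}}(y)$ that you state requires showing $c(\Sigma)([y]) = 0$. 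In the Tonelli setting the paper establishes this by observing that the set where $\Sigma$ fails to be transverse to the fibres has codimension $n$ in $\Sigma$, so for $n\ge 2$ any loop can be perturbed off it, and then invoking the zero axiom of the relative Maslov index (Lemma \ref{lem:trans}). You should add this step; without it your final equality, and the claimed equality of the two trivialization classes, is not justified.
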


\textbf{Duistermaat's Morse index theorem} \cite{Duistermaat1976}
says that if $(q,T)\in\mbox{Crit}(\mathcal{S}_{k})$ is strongly non-degenerate
and $y(t):=x(t/T)$ where $x\in\Lambda T^{*}M$ is defined as in Lemma
\ref{lem:critical points} then \[
i_{T}(q)=\mu_{\textrm{CZ}}(y)-\frac{1}{2}.\]
 This is proved in the twisted case for mechanical Hamiltonians in
\cite[Appendix A]{Merry2010a}. The proof there goes through for an
arbitrary Tonelli Hamiltonian; alternatively one could use the argument
of \cite[Lemma 3.12.5]{Merry2010b}, which works directly for any
Tonelli Hamiltonian. Combining Theorem \ref{thm:lag index-1} and
\eqref{eq:equality of correction terms} one sees that if $(q,T)\in\mbox{Crit}(\mathcal{S}_{k})$
is a strongly non-degenerate critical point of $\mathcal{S}_{k}$
and $x,x^{-}\in\Lambda T^{*}M$ are defined as in Lemma \ref{lem:critical points}
then \begin{equation}
i(q,T)=\mu_{\textrm{Rab}}(x^{+},T)=-\mu_{\textrm{Rab}}(x^{-},-T)\label{eq:index relation}\end{equation}
 (see \cite[Corollary 4.5]{Merry2010a}). Moreover as a corollary
of Proposition \ref{no depend} we deduce:

\begin{cor}The Morse index $i(q,T)$ depends only on the hypersurface
$\Sigma$ and not on the Lagrangian $L$.\end{cor}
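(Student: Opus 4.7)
The proof is essentially a direct combination of the two main results already established in the excerpt, so the plan is short.

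First I would set up what it means to change the Lagrangian while keeping $\Sigma$ fixed. Let $L, L'$ be two Tonelli Lagrangians with Fenchel dual Hamiltonians $H, H'$, and assume $\Sigma$ is a regular level set of $H$ at value $k$ and of $H'$ at value $k'$. Since the characteristic foliation of $\Sigma$ is intrinsic (it is $\ker \omega|_\Sigma$), the Hamiltonian vector fields $X_H$ and $X_{H'}$ are everywhere parallel along $\Sigma$, so a closed orbit $y$ of $X_H$ on $\Sigma$ is the same geometric curve as a closed orbit $y'$ of $X_{H'}$, differing only by a reparametrization (and hence period). Via Lemma \ref{lem:critical points} and the Legendre transforms of $L, L'$, these orbits correspond to critical points $(q,T) \in \mathrm{Crit}(\mathcal{S}_k)$ and $(q',T') \in \mathrm{Crit}(\mathcal{S}_{k'})$ with associated Rabinowitz critical points $(x^+, T)$ for $\mathcal{A}_k$ and $((x')^+, T')$ for $\mathcal{A}_{k'}$.

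Second, I would invoke equation \eqref{eq:index relation}, which says that under the strong non-degeneracy hypothesis
\[
i(q,T) = \mu_{\mathrm{Rab}}(x^+, T), \qquad i(q', T') = \mu_{\mathrm{Rab}}((x')^+, T').
\]
Strong non-degeneracy of $(q,T)$ is equivalent to strong non-degeneracy of the underlying periodic orbit $y$ in $\Sigma$, which is again a property of the characteristic foliation and hence intrinsic to $\Sigma$; so the right-hand side is defined simultaneously for both Hamiltonians.

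Third, I would apply Proposition \ref{no depend}, which asserts precisely that $\mu_{\mathrm{Rab}}$ depends only on $\Sigma$ and not on the Hamiltonian used to describe it. Consequently $\mu_{\mathrm{Rab}}(x^+, T) = \mu_{\mathrm{Rab}}((x')^+, T')$, and combining with the previous display gives $i(q,T) = i(q',T')$, which is the desired conclusion.

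There is really no hard step here: the content is packaged into Theorem \ref{thm:lag index-1} (which yields \eqref{eq:index relation}) and Proposition \ref{no depend}. The only point requiring a moment's care is the verification that the identifications of critical points under different Lagrangians are the right ones, i.e.\ that both sides refer to the same underlying closed characteristic of $\Sigma$; once this bookkeeping is made explicit, the corollary follows by transitivity.
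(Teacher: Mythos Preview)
Your proposal is correct and follows exactly the paper's approach: the corollary is stated immediately after \eqref{eq:index relation} as a consequence of Proposition~\ref{no depend}, and your argument combines precisely these two ingredients. The additional bookkeeping you spell out (that changing $L$ amounts to changing the defining Hamiltonian of $\Sigma$, and that the relevant non-degeneracy is intrinsic to the characteristic foliation) is implicit in the paper but worth making explicit.
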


This is interesting, as the same result is \textbf{not }true for the
fixed period Morse index $i_{T}(q)$.\newline 

Suppose now that $J$ is an $\omega$-compatible almost complex structure
on $T^{*}M$. By this we mean that $\omega(J\cdot,\cdot)$ defines
a Riemannian metric on $T^{*}M$. Using $J$ we can build an $L^{2}$-metric
$\left\langle \left\langle \cdot,\cdot\right\rangle \right\rangle _{J}$
on $\Lambda T^{*}M$ via\[
\left\langle \left\langle \xi,\xi'\right\rangle \right\rangle _{J}:=\int_{S^{1}}\omega(J\xi,\xi')dt.\]
 We will use the same notation for the product metric on $\Lambda T^{*}M\times\mathbb{R}^{+}$:\[
\left\langle \left\langle (\xi,b),(\xi',b')\right\rangle \right\rangle _{J}:=\left\langle \left\langle \xi,\xi'\right\rangle \right\rangle _{J}+bb'.\]
 Denote by $\nabla\mathcal{A}_{k}$ the gradient of $\mathcal{A}_{k}$
with respect to $\left\langle \left\langle \cdot,\cdot\right\rangle \right\rangle _{J}$.
Thus \[
\nabla\mathcal{A}_{k}(x,\eta)=\left(J(x)(\partial_{t}x-\eta X_{H}(x)),-\int_{S^{1}}(H(x)-k)dt\right).\]
 Let us denote by $\bar{\partial}_{\mathcal{A}_{k}}$ the \textbf{Rabinowitz
Floer operator }defined by\[
\bar{\partial}_{\mathcal{A}_{k}}:C^{\infty}(\mathbb{R}\times S^{1},T^{*}M)\times C^{\infty}(\mathbb{R},\mathbb{R})\rightarrow C^{\infty}(\mathbb{R}\times S^{1},TT^{*}M)\times C^{\infty}(\mathbb{R},\mathbb{R})\]
 \[
\bar{\partial}_{\mathcal{A}_{k}}(u):=\partial_{s}u+\nabla\mathcal{A}_{k}(u(s,\cdot)),\ \ \ u=(x,\eta).\]
 The operator $\bar{\partial}_{\mathcal{A}_{k}}$ extends to define
a section of a certain Banach bundle $\mathcal{E}$ over a certain
Banach manifold $\mathcal{M}\subseteq C^{0}(\mathbb{R}\times S^{1},T^{*}M\times\mathbb{R})\times C^{0}(\mathbb{R},\mathbb{R})$.
Roughly speaking, $\mathcal{M}$ consists of those maps $u=(x,\eta)$
that are of class $W_{\textrm{loc}}^{1,r}$ for some $r>2$, and satisfy
a certain prescribed behavior at infinity. Any $u\in\mathcal{M}$
with $\bar{\partial}_{\mathcal{A}_{k}}(u)=0$ is necessarily of class
$C^{\infty}$. Given two critical points $v_{\pm}:=(x_{\pm},\eta_{\pm})$
of $\mbox{Crit}(\mathcal{A}_{k})$ let us denote by $\mathcal{M}(v_{-},v_{+})\subseteq\mathcal{M}$
the set of zeros $u$ of $\bar{\partial}_{\mathcal{A}_{k}}$ that
submit to the asymptotic conditions\[
\lim_{s\rightarrow\pm\infty}u(s,\cdot)=v_{\pm}.\]

Note that in the definition of $\mathcal{M}(v_{-},v_{+})$ we are
\textbf{not} dividing through by the $\mathbb{R}$-action given by
translating along the gradient flow lines.\\
 Recall that a space $\mathcal{N}$ is said to have \textbf{virtual
dimension }$m\in\mathbb{Z}$ if $\mathcal{N}$ can be seen as the
set of zeros of a smooth section of some Banach bundle, whose linearization
is Fredholm of index $m$. If such a section is transverse to the
zero-section then the implicit function theorem implies that $\mathcal{N}=\emptyset$
or $m\geq0$ and $\mathcal{N}$ carries the structure of a smooth
$m$-dimensional manifold. In our second result we extend Proposition
4.1 of \cite{CieliebakFrauenfelder2009} to the situation at hand
and compute the virtual dimension of $\mathcal{M}(v_{-},v_{+})$ in
the case where $v_{\pm}$ are strongly non-degenerate critical points.
\begin{thm} \label{thm:rab index-1}Let $v_{\pm}=(x_{\pm},\eta_{\pm})\in\mbox{\emph{Crit}}(\mathcal{A}_{k})$
with $\eta_{\pm}\ne0$ denote critical points of $\mathcal{A}_{k}$
that admit non-degenerate orbit cylinders. Then the space $\mathcal{M}(v_{-},v_{+})$
has virtual dimension \[
\mbox{\emph{virdim }}\mathcal{M}(v_{-},v_{+})=\mu_{\textrm{\emph{Rab}}}(v_{-})-\mu_{\textrm{\emph{Rab}}}(v_{+})-1.\]

\end{thm}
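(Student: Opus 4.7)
The plan is to compute the Fredholm index of the linearization $D_u := D_u \bar\partial_{\mathcal A_k}$ at a smooth solution $u = (x,\eta) \in \mathcal M(v_-, v_+)$. Since by construction $\mathcal M(v_-, v_+)$ is the zero set of the section $\bar\partial_{\mathcal A_k}$ of the Banach bundle $\mathcal E \to \mathcal M$, its virtual dimension is precisely this Fredholm index. I would model the argument on \cite[Proposition 4.1]{CieliebakFrauenfelder2009}, modified to handle the twisted symplectic form and the weaker non-degeneracy hypothesis (non-degenerate orbit cylinder rather than strong non-degeneracy).

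First, by the standard theory of asymptotic operators in Floer-type theories, the Fredholm property and index of $D_u$ depend only on the self-adjoint asymptotic operators $A_\pm$ acting on $L^2(S^1, x_\pm^*T(T^*M)) \oplus \mathbb R$ obtained by freezing $s = \pm\infty$; the index then equals the spectral flow of any path of self-adjoint operators joining $A_-$ to $A_+$. Fredholm-ness itself requires each $A_\pm$ to have trivial kernel, and this is exactly what the non-degenerate orbit cylinder hypothesis buys us: the Lagrange multiplier direction $h \in \mathbb R$ pairs with, and resolves, the single extra degeneracy in the Hamiltonian direction produced by the shear block of \eqref{eq:matrix}.

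The heart of the computation is then a local index calculation at each end. Using the symplectic splitting from \eqref{eq:matrix}, I would decompose the asymptotic operator $A_\pm$ as a direct sum of (i) a transverse piece acting on sections of the symplectic normal bundle $\mathcal N_\pm \subset x_\pm^*T(T^*M)$, which carries the Poincar\'e return map $dP_{y_\pm}$, and (ii) a coupled $3$-dimensional block consisting of the $2\times 2$ shear in the directions spanned by $X_H$ and $\partial_s y_{k+s}|_{s=0}$, together with the Lagrange multiplier factor $\mathbb R$. The transverse piece is non-degenerate by the discussion after Definition \ref{orbit cylinder}, and by the standard Robbin--Salamon formula it contributes $\mu_{\mathrm{CZ}}(dP_{y_-}) - \mu_{\mathrm{CZ}}(dP_{y_+})$ to the spectral flow. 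This reduction to the transverse problem is essentially the one already used implicitly in \cite{CieliebakFrauenfelder2009}, and the twisting of the symplectic form does not affect the argument because the splitting is performed pointwise in the fibre over $\Sigma$.

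The main obstacle, and the point where the correction terms and the overall $-1$ shift appear, is the explicit analysis of the $3$-dimensional coupled block at each end. I would treat this by choosing a concrete $1$-parameter family of self-adjoint operators interpolating through the shear block (whose nontrivial entry has sign $\mathrm{sign}(-T'(k))$, i.e.\ $\chi(v_\pm)$ by \eqref{eq:chi q T}) coupled to the Lagrange multiplier, and then compute the Robbin--Salamon crossing form directly. The expected outcome is that this coupled block contributes $-\tfrac12 \chi(v_-) + \tfrac12 \chi(v_+) - 1$ to the spectral flow; adding the transverse contribution and rewriting $\mu_{\mathrm{CZ}}(y_\pm)$ in terms of $\mu_{\mathrm{Rab}}(v_\pm)$ via Definition \ref{defn:mu} yields
\[
\mathrm{virdim}\,\mathcal{M}(v_-, v_+) = \mu_{\mathrm{Rab}}(v_-) - \mu_{\mathrm{Rab}}(v_+) - 1.
\]
The most delicate part of this bookkeeping is the sign conventions for the Conley--Zehnder index in the presence of Floquet multipliers equal to $1$; I would adopt those of \cite{AbbondandoloPortaluriSchwarz2008} as flagged in Definition \ref{defn:mu}, and cross-check on a model example (such as the untwisted geodesic case, where \eqref{eq:index relation} and Duistermaat's theorem already pin down all the signs) to eliminate any remaining ambiguity.
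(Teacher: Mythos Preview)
Your overall strategy---reduce to the spectral flow of the asymptotic Hessians and then isolate the Lagrange-multiplier contribution---is the same reduction the paper performs. But the paper carries out the isolation quite differently from the symplectic-splitting route you propose.

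Rather than decompose the asymptotic operator along the splitting $T\mathcal{O}\oplus E$ of \eqref{eq:matrix} into a transverse piece and a $3$-dimensional coupled block, the paper keeps the full $2n$-dimensional Hessian $A^{\pm}=\nabla^{2}\mathcal{A}_{k}^{\eta_{\pm}}(x_{\pm})$ intact and applies Cieliebak--Frauenfelder's abstract spectral-flow formula (Theorem~\ref{thm:CF}, i.e.\ \cite[Theorem~C.5]{CieliebakFrauenfelder2009}): with $h^{\pm}=-\nabla H(x_{\pm})$ one has
\[
\mu_{\mathrm{SF}}(A_{h})=\mu_{\mathrm{SF}}(A)-\tfrac{1}{2}\,\mathrm{sign}(\lambda_{A^{+},h^{+}})+\tfrac{1}{2}\,\mathrm{sign}(\lambda_{A^{-},h^{-}}),
\]
where $\lambda_{A,h}=\langle v,h\rangle$ for any $v$ with $Av=h$. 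The entire computation then reduces to finding such a $v$, and this is where the orbit cylinder enters: differentiating the family $(x_{k+s},\eta(k+s))$ in $s$ produces the single identity
\[
\nabla^{2}\mathcal{A}_{k}^{\eta(k)}(x_{k})(\xi_{k})=\eta'(k)\,\nabla H(x_{k}),\qquad \xi_{k}:=\partial_{s}\bigl|_{s=0}x_{k+s},
\]
so $v=-\xi_{k}/\eta'(k)$ solves $Av=h$ and $\lambda_{A,h}=1/\eta'(k)$, whence $\mathrm{sign}(\lambda_{A^{\pm},h^{\pm}})=-\chi(v_{\pm})$. Combined with $\mu_{\mathrm{SF}}(A)=\mu_{\mathrm{CZ}}(y_{-})-\mu_{\mathrm{CZ}}(y_{+})-1$ this yields the theorem with no bundle splitting and no explicit crossing-form analysis of a model block.

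Your splitting approach can certainly be made to work (indeed the paper uses exactly that splitting in Subsection~\ref{subsection:mu}, but for a different purpose), yet as written the bookkeeping is inconsistent. If $\mu_{\mathrm{CZ}}(dP_{y_{\pm}})$ is the transverse Conley--Zehnder index $\mu_{\mathrm{CZ}}^{\tau}(y_{\pm})$, then Subsection~\ref{subsection:mu} already gives $\mu_{\mathrm{CZ}}^{\tau}(y_{\pm})=\mu_{\mathrm{Rab}}(v_{\pm})$ for $\eta_{\pm}>0$, so the $3$-dimensional block must contribute exactly $-1$, not $-\tfrac{1}{2}\chi(v_{-})+\tfrac{1}{2}\chi(v_{+})-1$. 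In effect you have double-counted the $\chi$-terms: they sit in the difference between the full and transverse Conley--Zehnder indices (equation~\eqref{eq:sign T dash}), not in the Lagrange-multiplier coupling. The paper's route avoids this pitfall precisely because it never splits off the transverse directions; the trade-off is that it needs the black-box Theorem~\ref{thm:CF}, whereas your approach would be more self-contained once the accounting is repaired.
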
 We emphasize that the {}``-1'' in the formula above comes
from the fact that we are in a Morse-Bott situation, where the critical
manifolds have dimension 1. The method we use in order to compute
the virtual dimension differs in some respect to that of \cite[Proposition 4.1]{CieliebakFrauenfelder2009}.
In both cases the key part of the calculation is the computation of
a {}``correction term'' between the spectral flow of the Rabinowitz
action functional and the spectral flow of the classical Hamiltonian
action functional. The difference is that in \cite[Section 4]{CieliebakFrauenfelder2009}
the authors first perturb the hypersurface in such a way that the
{}``correction term'' is easy to compute \cite[Lemma C.6]{CieliebakFrauenfelder2009}.
In contrast, we work directly with the Hamiltonian $H$. However as
a consequence we need to assume that the given orbits are strongly
non-degenerate, whereas in \cite[Section 4]{CieliebakFrauenfelder2009}
they require only that the Rabinowitz action functional is Morse-Bott
on the component of the critical set containing the given orbits.

We remark that we are not assuming at any point that our hypersurfaces
are of contact type or stable. We also note that most likely our orientability
assumption is not necessary and the arguments of this paper can be extended using the trivializations from \cite{Weber2002}.

\subsection{Relationship with Rabinowitz Floer homology and Ma\~n\'e's critical
values}

$\ $\vspace{6pt}

So far we have not placed any restriction on the value $k\in\mathbb{R}$,
apart from asking for $k$ to be a regular value of $H$. In fact,
to be able to define and compute the Rabinowitz Floer homology of
$\Sigma$, or to be able to do Morse homology with $\mathcal{S}_{k}$,
we need to make an additional assumption on $k$. One option is to
assume that $k$ is sufficiently large. More precisely, there are
two particular {}``critical values'' $c$ and $c_{0}$ of $k$,
known as the\textbf{ Ma\textbf{\~n\'e} critical values}%
\footnote{$c$ is sometimes also denoted as $c_u$. %
}. They are such that the dynamics of the hypersurface $H^{-1}(k)$
differs dramatically depending on the relation of $k$ to these numbers.
They satisfy $c<\infty$ if and only if $\widetilde{\sigma}$ admits
a bounded primitive, and $c_{0}<\infty$ if and only if $\sigma$
is actually exact, and are defined as follows: let $\widetilde{H}:T^{*}\widetilde{M}\rightarrow\mathbb{R}$
denote the lift of $H$ to $T^{*}\widetilde{M}$, and set

\begin{equation}
c:=\inf_{\theta}\sup_{q\in\widetilde{M}}\widetilde{H}(q,\theta_{q}),\label{eq:mcv1}\end{equation}
where the infimum is taken over all $1$-forms $\theta$ on $\widetilde{M}$
with $d\theta=\widetilde{\sigma}$. If $\sigma$ is not exact define
$c_{0}:=\infty$. Otherwise define \begin{equation}
c_{0}:=\inf_{\theta}\sup_{q\in M}H(q,\theta_{q}),\label{eq:mcv2}\end{equation}
that is, the same definition only working directly on $T^{*}M$ rather
than lifting to $T^{*}\widetilde{M}$. See for instance \cite[Section 5]{CieliebakFrauenfelderPaternain2010}
for more information. 

In \cite{Merry2010a} the first author computed%
\footnote{Technically speaking, in \cite{Merry2010a} only mechanical Hamiltonians,
rather than Tonelli Hamiltonians are considered, although the proofs
go through in this more general setting.%
} the Rabinowitz Floer homology of $\Sigma$ for $k>c$, which builds
on the earlier construction of Abbondandolo and Schwarz \cite{AbbondandoloSchwarz2009}
which computed the Rabinowitz Floer homology of $\Sigma$ for $k>c_{0}$.

To get started one first assumes that all periodic orbits of $X_{H}$
contained in $\Sigma$ are strongly non-degenerate. Then the key idea
is to define chain maps between the Rabinowitz Floer chain complex,
which is generated by the critical points of $\mathcal{A}_{k}$, and
the Morse (co)chain complex generated by the critical points of $\mathcal{S}_{k}$.
The motivation for such a construction is provided by Lemma \ref{lem:critical points}
and the relationship \eqref{eq:index relation}.\\

A further remark to make is that in the setting of Rabinowitz Floer
homology one is free to use any Hamiltonian $F:T^{*}M\rightarrow\mathbb{R}$
such that $F^{-1}(k)=\Sigma$ and $X_{F}|_{\Sigma}$ coincides with
the Reeb vector field of the hypersurface. If $\Sigma$ is of \textbf{restricted
contact type }then one can choose such a Hamiltonian $F$ such that
$F$ near $\Sigma$ is radial and homogeneous of degree 2 in the radial
direction provided by the symplectization of the hypersurface. In
this case it is not hard to see that the correction term of a strongly
non-degenerate orbit $(x,\eta)$ is always given by the sign of $\eta$.
If $k>c_{0}$ then one can always find such a Hamiltonian (see \cite[Section 10]{AbbondandoloSchwarz2009});
this explains why in \cite{AbbondandoloSchwarz2009} the correction
terms do not appear explicitly. If we just ask that $k>c$ then it
is convenient for several reasons to keep the physical Hamiltonian
$H$; thus in \cite{Merry2010a} the correction term does appear.
One compelling reason to keep $H$ unchanged is that for $k>c$, the
hypersurface is now only virtually contact and not necessarily contact,
see \cite{CieliebakFrauenfelderPaternain2010}.

A natural question to ask therefore is whether if for $k>c$ the correction
terms $\chi(q,T)$ and $\chi(x,\eta)$ (for $\eta>0$) are always
$+1$. In the last section of this note we answer this question in
the negative. More precisely, we construct an example of a mechanical
Hamiltonian $H$ on $(T^{*}S^{2},\omega_{\sigma})$ (for a suitable
choice of magnetic form $\sigma$) for which there exist a strongly
non-degenerate orbit $y$ of $X_{H}$ with energy $k>c$ (in this
case $c=c_{0}$) for which the correction term $\chi(y)$ is equal
to $-1$. (It is easily seen that the example can be arranged so that
it also carries orbits for which the correction term is $+1$.) In
particular, for this example Theorem \ref{thm:lag index-1} implies
that $i(q,T)=i_{T}(q)+1$.

\subsection{\label{sub:Dropping-the-assumption}Dropping the assumption that
$\sigma$ is weakly exact and admits a bounded primitive}

$\ $\vspace{6pt}

We conclude this introduction with the following remark. None of the
results in this paper actually use the fact that $\sigma$ is weakly
exact and admits a bounded primitive, at least if one is prepared
to work with 1-forms rather than functionals. More precisely, suppose
we consider the $1$-form $\mathfrak{a}_{k}\in\Omega^{1}(\Lambda T^{*}M\times\mathbb{R})$
defined by \[
\mathfrak{a}_{k}(x,\eta)(\xi,b):=\int_{S^{1}}\omega(\xi,\dot{x}-\eta X_{H}(x))dt-b\int_{S^{1}}H(x)dt\ \ \ \mbox{for}\ \ \ (\xi,b)\in T_{(x,\eta)}(\Lambda T^{*}M\times\mathbb{R}).\]
 Under the assumption that $\sigma$ is closed, $\mathfrak{a}_{k}$
is a well defined closed\textbf{ }1-form on $\Lambda T^{*}M\times\mathbb{R}$.
Of course, if $\sigma$ is weakly exact and admits a bounded primitive
then $\mathfrak{a}_{k}$ is exact, with \[
\mathfrak{a}_{k}(x,\eta)=d_{(x,\eta)}\mathcal{A}_{k}.\]
 The critical points of $\mathfrak{a}_{k}$, that is, the points $(x,\eta)\in\Lambda T^{*}M\times\mathbb{R}$
with $a_{k}(x,\eta)=0$, are still given by \eqref{eq:critA}.

In a similar vein, if we only assume that $\sigma$ is closed then
we can define a closed 1-form $\mathfrak{s}_{k}$ on $\Lambda M\times\mathbb{R}^{+}$.
As with $\mathfrak{a}_{k}$, if we make the additional assumptions
that $\sigma$ is weakly exact and admits a bounded primitive, then
$\mathfrak{s}_{k}$ is exact and satisfies \[
\mathfrak{s}_{k}(q,T)=d_{(q,T)}\mathcal{S}_{k}.\]
 As with $\mathcal{S}_{k}$, the critical points of $\mathfrak{s}_{k}$
are the pairs $(q,T)$ such that if $\gamma:[0,T]\rightarrow M$ is
defined by $\gamma(t):=q(t/T)$ then $\gamma$ is the projection to
$M$ of a closed orbit of $\phi_{t}$ contained in $\Sigma$.

Both Theorem \ref{thm:lag index-1} and Theorem \ref{thm:rab index-1}
continue to make sense if we work only with the 1-forms $\mathfrak{s}_{k}$
and $\mathfrak{a}_{k}$ respectively, and all the proofs in this note
go through word for word in this more general setting. Hopefully this
observation will be useful in defining a more general Rabinowitz Floer
homology for twisted cotangent bundles where $\sigma$ is only assumed
closed. 

\begin{acknowledgement*} We are deeply grateful to the anonymous
referee for suggesting a considerable simplification to the proof
of Theorem \ref{thm:lag index-1} (cf. Remark \ref{referee}). We
would also like to thank Alberto Abbondandolo for several helpful
comments and suggestions on an earlier draft of this note and to Urs
Frauenfelder for his remark that led to Subsection \ref{subsection:mu}.
\end{acknowledgement*}

\section{Proofs}

We denote by $\overline{\mathbb{R}}$ the extended real line $\overline{\mathbb{R}}:=\mathbb{R}\cup\{\pm\infty\}$,
with the differentiable structure induced by the bijection $[-\pi/2,\pi/2]\rightarrow\overline{\mathbb{R}}$
given by \[
s\mapsto\begin{cases}
\tan s & s\in(-\pi/2,\pi/2)\\
\pm\infty & s=\pm\pi/2.
\end{cases}\]
 \textbf{All the sign conventions used in this paper match those of
\cite{AbbondandoloSchwarz2009,Merry2010a}}.

\subsection{\label{sub:The-proof-of}The proof of Theorem \ref{thm:lag index-1}}

$\ $\vspace{6pt}

Denote by $\nabla^{2}\mathcal{S}_{k}(q,T)$ and $\nabla^{2}\mathcal{S}_{k}^{T}(q)$
the $W^{1,2}$-Hessians of $\mathcal{S}_{k}$ and $\mathcal{S}_{k}^{T}$,
so that if $(q,T)\in\mbox{Crit}(\mathcal{S}_{k})$ then \[
d_{(q,T)}^{2}\mathcal{S}_{k}((\zeta,b),(\zeta',b'))=\left\langle \left\langle \nabla^{2}\mathcal{S}_{k}(q,T)(\zeta,b),(\zeta',b')\right\rangle \right\rangle ;\]
\[
d_{(q,T)}^{2}\mathcal{S}_{k}(\zeta,\zeta')=\left\langle \left\langle \nabla^{2}\mathcal{S}_{k}^{T}(q)(\zeta),\zeta'\right\rangle \right\rangle .\]
 The assertion of the theorem is local, so without loss of generality
we may suppose $M=\mathbb{R}^{n}$. Suppose $(q_{k+s},T(k+s))_{s\in(-\varepsilon,\varepsilon)}$
is an orbit cylinder about $(q_{k},T(k))$. Let \[
\zeta_{k}(t):=\frac{\partial}{\partial s}q_{k+s}(t).\]
Since $(q_{k+s},T(k+s))$ is a critical point of $\mathcal{S}_{k+s}$,
for any $(\zeta,b)\in W^{1,2}(S^{1},\mathbb{R}^{n})\times\mathbb{R}$
we have \[
d_{(q_{k+s},T(k+s))}\mathcal{S}_{k+s}(\zeta,b)=0.\]
We differentiate this equation with respect to $s$ and evaluate at
$s=0$ to obtain\begin{align*}
0 & =\frac{\partial}{\partial s}\Bigl|_{s=0}d_{(q_{k+s},T(k+s))}\mathcal{S}_{k+s}(\zeta,b)\\
 & =\frac{\partial}{\partial s}\Bigl|_{s=0}d_{(q_{k},T(k))}\mathcal{S}_{k+s}(\zeta,b)+d_{(q_{k},T(k))}^{2}\mathcal{S}_{k}((\zeta_{k},T'(k)),(\zeta,b)).
\end{align*}
In order to compute $\frac{\partial}{\partial s}\Bigl|_{s=0}d_{(q_{k},T(k))}\mathcal{S}_{k+s}(\zeta,b)$,
choose a variation $(q_{k,r},T(k,r))$ for $r\in(-\varepsilon,\varepsilon)$
such that $(q_{k,0},T(k,0))=(q_{k},T(k))$ and $\frac{\partial}{\partial r}\Bigl|_{r=0}(q_{k,r},T(k,r))=(\zeta,b)$.
Then \begin{align}
\frac{\partial}{\partial s}\Bigl|_{s=0}d_{(q_{k},T(k))}\mathcal{S}_{k+s}(\zeta,b) & =\frac{\partial}{\partial s}\Bigl|_{s=0}\frac{\partial}{\partial r}\Bigl|_{r=0}\mathcal{S}_{k+s}(q_{k,r},T(k,r))\label{eq:why it's b}\\
 & =\frac{\partial}{\partial r}\Bigl|_{r=0}\frac{\partial}{\partial s}\Bigl|_{s=0}\mathcal{S}_{k+s}(q_{k,r},T(k,r))\nonumber \\
 & =\frac{\partial}{\partial r}\Bigl|_{r=0}T(k,r)\nonumber \\
 & =b.\nonumber 
\end{align}
In other words, we have proved:\[
d_{(q_{k},T(k))}^{2}\mathcal{S}_{k}((\zeta_{k},T'(k)),\zeta,b))=-b.\]
Taking $b=0$ we see that \begin{equation}
d_{(q_{k},T(k))}^{2}\mathcal{S}_{k}((\zeta_{k},T'(k)),(\zeta,0))=0,\label{eq:ort1}\end{equation}
and taking $(\zeta,b)=(\zeta_{k},T'(k))$ we see that \[
d_{(q_{k},T(k))}^{2}\mathcal{S}_{k}((\zeta_{k},T'(k)),(\zeta_{k},T'(k)))=-T'(k).\]
Let us write $W^{1,2}(S^{1},\mathbb{R}^{n})\times\mathbb{R}=E^{+}\oplus\ker(\nabla^{2}\mathcal{S}_{k}(q_{k},T(k))\oplus E^{-}$,
where $E^{+(-)}$ is the positive (negative) eigenspace of $\nabla^{2}\mathcal{S}_{k}(q_{k},T(k))$.
Similarly write $W^{1,2}(S^{1},\mathbb{R}^{n})=E_{T(k)}^{+}\oplus\ker(\nabla^{2}\mathcal{S}_{k}^{T(k)}(q_{k}))\oplus E_{T(k)}^{-}$,
where $E_{T(k)}^{+(-)}$ is the positive (negative) eigenspace of
$\nabla^{2}\mathcal{S}_{k}^{T(k)}(q_{k})$ (such spectral decompositions
exist as $\nabla^{2}\mathcal{S}_{k}(q_{k},T(k))$ and $\nabla^{2}\mathcal{S}_{k}^{T(k)}(q_{k})$
are bounded self-adjoint operators). Clearly $E_{T(k)}^{\pm}\times\{0\}\subseteq E^{\pm}$.
Equation \eqref{eq:ort1} tells us that the 1-dimensional vector space
$W:=\mbox{span}_{\mathbb{R}}\left\langle (\zeta_{k},T'(k))\right\rangle $
is orthogonal to $W^{1,2}(S^{1},\mathbb{R}^{n})\times\{0\}$, and
thus we must have \[
E^{+}=E_{T(k)}^{+}\times\{0\},\ \ \ E^{-}=(E_{T(k)}^{-}\times\{0\})\oplus W\ \ \ \mbox{if }T'(k)>0,\]
or\[
E^{+}=(E_{T(k)}^{+}\times\{0\})\oplus W,\ \ \ E^{-}=E_{T(k)}^{-}\times\{0\}\ \ \ \mbox{if }T'(k)<0.\]
The theorem follows.

\subsection{\label{sub:The-spectral-flow}The spectral flow}

$\ $\vspace{6pt}

In this section we recall the definition of the \textbf{spectral flow},
following the exposition in \cite{RobbinSalamon1995}, and then recall
the statement of a theorem of Cieliebak and Frauenfelder \cite[Theorem C.5]{CieliebakFrauenfelder2009},
which we use to prove Theorem \ref{thm:rab index-1}.\textbf{ }Let
$W$ and $H$ denote a pair of separable real Hilbert spaces with
$W\subseteq H=H^{*}\subseteq W^{*}$ such that the inclusion $W\hookrightarrow H$
is compact with dense range. Let $\mathbf{L}(W,H)$ denote the set
of bounded linear operators, and let $\mathbf{S}(W,H)\subseteq\mathbf{L}(W,H)$\textbf{
}denote the subspace of self-adjoint operators (considered as unbounded
operators on $H$ with dense domain $W$).

Denote by $\mathbf{A}(W,H)$ the set of continuous (with respect to
the norm topologies) maps $A:\mathbb{R}\rightarrow\mathbf{S}(W,H)$
and such that the limits\[
A^{\pm}:=\lim_{s\rightarrow\pm\infty}A(s)\]
 exist and belong to $\mathbf{S}(W,H)$. Denote by $\mathbf{A}_{0}(W,H)\subseteq\mathbf{A}(W,H)$
the subset consisting of those elements $A\in\mathbf{A}(W,H)$ such
that the limit operators $A^{\pm}$ are bijective onto $W$. $ $The
\textbf{spectral flow }is a map \[
\mu_{\textrm{SF}}:\mathbf{A}_{0}(W,H)\rightarrow\mathbb{Z}\]
 which is characterized by the following four properties. 
\begin{itemize}
\item $\mu_{\textrm{SF}}$ is constant on the connected components of $\mathbf{A}_{0}(W,H)$. 
\item If $A$ is a constant map, $\mu_{\textrm{SF}}(A)=0$. 
\item $\mu_{\textrm{SF}}(A_{0}\oplus A_{1})=\mu_{\textrm{SF}}(A_{0})+\mu_{\textrm{SF}}(A_{1})$. 
\item If $W=H$ are finite dimensional then \[
\mu_{\textrm{SF}}(A)=\frac{1}{2}\mbox{sign}(A^{+})-\frac{1}{2}\mbox{sign}(A^{-}),\]
 where for a matrix $C$, $\mbox{sign}(C)$ denotes the number of
positive eigenvalues of $C$ minus the number of negative eigenvalues
of $C$. 
\end{itemize}
The spectral flow may be extended to a function $\mu_{\textrm{SF}}:\mathbf{A}(W,H)\rightarrow\mathbb{Z}$
as follows. Suppose\textbf{ }$A\in\mathbf{A}(W,H)$. Since the limit
operators $A^{\pm}$ are self-adjoint, their spectrums $\sigma(A^{\pm})$
are discrete and contained in $\mathbb{R}$. Hence \[
\lambda:=\inf\left\{ \left|\rho\right|\,:\,\rho\in(\sigma(A^{+})\cup\sigma(A^{-}))\backslash\{0\}\right\} >0.\]
 Thus if we choose $0<\delta<\lambda$ then the operators \[
A_{\delta}^{\pm}:=A^{\pm}\mp\delta\mathbb{1}\]
 are bijective. Let $\beta\in C^{\infty}(\mathbb{R},[-1,1])$ denote
a smooth cutoff function such that $\beta(s)=1$ for $s\geq1$ and
$\beta(s)=-1$ for $s\leq0$, and set \[
A_{\delta}(s):=A(s)-\delta\beta(s)\mathbb{1},\]
 where $\delta>0$ is as above. Then $A_{\delta}\in\mathbf{A}_{0}(W,H)$,
and hence the spectral flow $\mu_{\textrm{SF}}(A_{\delta})$ is well
defined. We may therefore define\[
\mu_{\textrm{SF}}(A):=\lim_{\delta\searrow0}\mu_{\textrm{SF}}(A_{\delta}).\]
 Note that this limit stabilizes for $\delta>0$ sufficiently small.

Suppose now we are given maps $A\in\mathbf{A}(W,H)$, $h\in\mathbf{A}(\mathbb{R},H)$
and $\tau\in\mathbf{A}(\mathbb{R},\mathbb{R})$. Define the map $A_{h,\tau}\in\mathbf{A}(W\oplus\mathbb{R},H\oplus\mathbb{R})$
by:\[
A_{h,\tau}(s)\left(\begin{array}{c}
v\\
b
\end{array}\right)=\left(\begin{array}{cc}
A(s) & h(s)\\
h^{*}(s) & \tau(s)
\end{array}\right)\left(\begin{array}{c}
v\\
b
\end{array}\right)=\left(\begin{array}{c}
A(s)(v)+bh(s)\\
\left\langle h(s),v\right\rangle _{H}+\tau(s)b
\end{array}\right).\]
 The spectral flow of the operator $A_{h,\tau}$ is then given by:\[
\mu_{\textrm{SF}}(A_{h,\tau}):=\lim_{\delta\searrow0}\mu_{\textrm{SF}}((A_{h,\tau})_{\delta}).\]
 If $\tau\equiv0$ we write $A_{h}$ instead of $A_{h,0}$.

Before stating the theorem of Cieliebak and Frauenfelder, we introduce
the following terminology. \begin{defn} \label{def:regular}Say that
the triple $(A,h,\tau)\in\mathbf{S}(W,H)\times H\times\mathbb{R}$
is \textbf{regular }if $h\in W\cap\mbox{range}(A)$ and if $v\in W$
is any vector such that $Av=h$, then the real number $\lambda_{A,h}:=\left\langle v,h\right\rangle _{H}$
is not equal to $\tau$. Note $h$ is orthogonal to $\ker(A)$ as
$A$ is self-adjoint. Thus $\lambda_{A,h}$ does not depend on the
choice of $v$ such that $Av=h$: if $v'$ is another choice then
$v-v'\in\ker(A)$ and hence $\left\langle v-v',h\right\rangle _{H}=0$
as $h\perp\ker(A)$. \end{defn} The next result is \cite[Theorem C.5]{CieliebakFrauenfelder2009}.
\begin{thm} \label{thm:CF}\textbf{\emph{(Cieliebak, Frauenfelder)}}

Let $A\in\mathbf{A}(W,H)$, $h\in\mathbf{A}(\mathbb{R},H)$ and $\tau\in\mathbf{A}(\mathbb{R},\mathbb{R})$.
Assume that the limit operators $(A^{\pm},h^{\pm},\tau^{\pm})$ are
regular triples. Then \[
\mu_{\textrm{\emph{SF}}}(A_{h,\tau})=\mu_{\textrm{\emph{SF}}}(A)+\frac{1}{2}\mbox{\emph{sign}}(\tau^{+}-\lambda_{A^{+},h^{+}})-\frac{1}{2}\mbox{\emph{sign}}(\tau^{-}-\lambda_{A^{-},h^{-}}).\]

\end{thm}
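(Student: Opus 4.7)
The plan is to reduce Theorem~\ref{thm:CF} to a finite-dimensional signature computation via a Schur complement identity, and then to propagate the resulting formula through the spectral flow using the four characterising axioms together with the additivity of spectral flow under concatenation.

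The algebraic heart of the argument is the following Schur identity. Given a regular triple $(A, h, \tau)$, since $A$ is self-adjoint $h$ is automatically orthogonal to $\ker(A)$, and by regularity there exists a unique $v \in \ker(A)^{\perp} \cap W$ with $Av = h$. Set
\[
T := \begin{pmatrix} \mathbb{1} & -v \\ 0 & 1 \end{pmatrix} \in \mathbf{L}(W \oplus \mathbb{R}).
\]
A direct computation, using $Av = h$ and $\langle v, h \rangle_{H} = \lambda_{A,h}$, gives the congruence
\[
T^{*} A_{h,\tau} T = \begin{pmatrix} A & 0 \\ 0 & \tau - \lambda_{A,h} \end{pmatrix}.
\]
In the finite-dimensional setting Sylvester's law of inertia then gives $\mathrm{sign}(A_{h,\tau}) = \mathrm{sign}(A) + \mathrm{sign}(\tau - \lambda_{A,h})$, and combining this with the fourth spectral-flow axiom applied separately to $A_{h,\tau}$ and to $A$ immediately proves the theorem when $\dim W < \infty$.

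For the general infinite-dimensional case I would argue by conjugation and concatenation. Choose a continuous family $s \mapsto T(s)$ of invertible elements of $\mathbf{L}(W \oplus \mathbb{R})$ with $T(\pm \infty) = T^{\pm}$ (the Schur operators built from $A^{\pm}, h^{\pm}$) and such that $T(s) = \mathbb{1}$ on some large compact interval. This is possible because each $T^{\pm}$ differs from $\mathbb{1}$ by a rank-one operator and hence lies in the identity component of the invertibles. The spectral flow is invariant under congruence by such a family, so $\mu_{\textrm{SF}}(A_{h,\tau}) = \mu_{\textrm{SF}}(T(\cdot)^{*} A_{h,\tau}(\cdot) T(\cdot))$, and the endpoints of the conjugated path are the block-diagonal operators $A^{\pm} \oplus (\tau^{\pm} - \lambda_{A^{\pm}, h^{\pm}})$. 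The direct-sum axiom then splits the spectral flow of this asymptotically block-diagonal path: the first summand contributes $\mu_{\textrm{SF}}(A)$, while the one-dimensional scalar summand contributes exactly the boundary term $\tfrac{1}{2}\mathrm{sign}(\tau^{+} - \lambda_{A^{+}, h^{+}}) - \tfrac{1}{2}\mathrm{sign}(\tau^{-} - \lambda_{A^{-}, h^{-}})$ by the finite-dimensional computation from the previous paragraph.

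The main technical obstacle is compatibility with the $\delta$-regularisation used to extend spectral flow to non-invertible endpoints. When $A^{\pm}$ fails to be bijective onto $W$, the Schur vector $v^{\pm}$ is defined on $\ker(A^{\pm})^{\perp}$ only, so one must verify that as $\delta \to 0^{+}$ the quantities $\lambda_{A^{\pm} - \delta \mathbb{1}, h^{\pm}}$ converge to $\lambda_{A^{\pm}, h^{\pm}}$. This continuity follows from the regularity hypothesis together with continuity of the pseudo-inverse on $\mathrm{range}(A^{\pm})$; combined with the condition $\tau^{\pm} \neq \lambda_{A^{\pm}, h^{\pm}}$ it ensures that the identity can first be established for the $\delta$-perturbed operators and then passed to the limit $\delta \to 0^{+}$ to yield the stated formula.
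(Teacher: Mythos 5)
This theorem is not proved in the paper under review: it is \cite[Theorem C.5]{CieliebakFrauenfelder2009} and the paper simply cites it, so there is no {}``paper's proof'' to compare against. Your proposal must therefore be judged on its own merits, and it does contain a real gap.

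Your Schur complement identity is correct: with $T=\bigl(\begin{smallmatrix}\mathbb{1} & -v\\0 & 1\end{smallmatrix}\bigr)$ and $Av=h$, one indeed has $T^{*}A_{h,\tau}T=A\oplus(\tau-\lambda_{A,h})$, and hence in the finite-dimensional case Sylvester's law of inertia together with the fourth axiom gives the formula. This part is fine. The problem is the reduction of the infinite-dimensional case to this computation. After conjugating by your family $T(s)$ you obtain a path whose \emph{limit operators} are block diagonal, but whose interior values $T(s)^{*}A_{h,\tau}(s)T(s)$ are genuinely coupled (in fact on the large compact interval where $T(s)=\mathbb{1}$ you have made no change at all). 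The direct-sum axiom $\mu_{\textrm{SF}}(A_{0}\oplus A_{1})=\mu_{\textrm{SF}}(A_{0})+\mu_{\textrm{SF}}(A_{1})$ applies only to paths that are a direct sum for \emph{every} $s$, not merely at $\pm\infty$, so the sentence {}``the direct-sum axiom then splits the spectral flow of this asymptotically block-diagonal path'' is not a valid application of the axiom. To conclude you would have to produce a homotopy, within $\mathbf{A}_{0}(W\oplus\mathbb{R},H\oplus\mathbb{R})$ and with appropriate control of the (possibly degenerating) endpoints, from the conjugated path to an honestly block-diagonal path $A(s)\oplus c(s)$, or else compute the spectral flow directly via crossing forms; neither is supplied.

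There is also a secondary issue worth flagging: the $\delta$-regularisation $A\mapsto A-\delta\beta(s)\mathbb{1}$ does not commute with conjugation by $T(s)$, because $T^{\pm}$ are not unitary ($(T^{\pm})^{*}T^{\pm}\ne\mathbb{1}$). Thus if you regularise $A_{h,\tau}$ first and then conjugate, the resulting limit operators are not $A^{\pm}_{\delta}\oplus(\tau^{\pm}_{\delta}-\lambda^{\pm}_{\delta})$ but a rank-two perturbation thereof, so even the block-diagonality at the endpoints has to be re-examined once $\delta>0$ is introduced. The claim that $\lambda_{A^{\pm}-\delta,h^{\pm}}\to\lambda_{A^{\pm},h^{\pm}}$ is correct (since $h^{\pm}\perp\ker A^{\pm}$, the pseudo-inverse applied to $h^{\pm}$ is continuous in $\delta$), but the surrounding argument needs these two points addressed before the continuity is actually usable.
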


\begin{rem}\label{referee}In fact, we use Theorem \ref{thm:CF}
only in the case where $\tau\equiv0$ (this is also how the theorem
is stated in \cite{CieliebakFrauenfelder2009}, although the proof
goes through without change). In an earlier version of this paper
we proved Theorem \ref{thm:lag index-1} by applying Theorem \ref{thm:CF}
to the Hessian of $\mathcal{S}_{k}$, and arguing as in \cite[Corollary 6.41]{RobbinSalamon1995}.
The referee pointed out that Theorem \ref{thm:lag index-1} follows
from the simpler argument given in Subsection \ref{sub:The-proof-of}
above; we are very grateful to him or her for this observation.\end{rem}

\subsection{\label{sub:The-proof-of rab index}The proof of Theorem \ref{thm:rab index-1}}

$\ $\vspace{6pt}

In this section we prove Theorem \ref{thm:rab index-1}. Before getting
started, we introduce some more notation, Suppose $(x,\eta)\in\mbox{Crit}(\mathcal{A}_{k})$
with $\eta>0$. Let us denote by \[
\mathcal{A}_{k}^{\eta}:\Lambda T^{*}M\rightarrow\mathbb{R}\]
 the \textbf{classical action functional }of Hamiltonian mechanics:\[
\mathcal{A}_{k}^{\eta}(x):=\mathcal{A}_{k}(x,\eta).\]
 Let $\nabla\mathcal{A}_{k}^{\eta}$ denote the $L^{2}$-gradient
of $\mathcal{A}_{k}^{\eta}$ with respect to the metric $\left\langle \left\langle \cdot,\cdot\right\rangle \right\rangle _{J}$
on $\Lambda T^{*}M$, and as above denote by $\nabla^{2}\mathcal{A}_{k}(x,\eta)$
and $\nabla^{2}\mathcal{A}_{k}^{\eta}(x)$ the $L^{2}$-Hessians of
$\mathcal{A}_{k}$ and $\mathcal{A}_{k}^{\eta}$ at a critical point
$(x,\eta)$ of $\mathcal{A}_{k}$. One checks that $\nabla^{2}\mathcal{A}_{k}(x,\eta)$
and $\nabla^{2}\mathcal{A}_{k}^{\eta}(x)$ are related as follows:
\begin{align}
\nabla^{2}\mathcal{A}_{k}(x,\eta)(\xi,b) & =\left(\begin{array}{cc}
\nabla^{2}\mathcal{A}_{k}^{\eta}(x) & -\nabla H(x)\\
-\nabla H(x)^{*} & 0
\end{array}\right)\left(\begin{array}{c}
\xi\\
b
\end{array}\right)\label{eq:hessian}\\
 & =\left\langle \left\langle \nabla^{2}\mathcal{A}_{k}^{\eta}(x),\xi\right\rangle \right\rangle _{J}-b\left\langle \left\langle \nabla H(x),\xi\right\rangle \right\rangle _{J}.\nonumber 
\end{align}
Now fix two critical points $v_{\pm}=(x_{\pm},\eta_{\pm})\in\mbox{Crit}(\mathcal{A}_{k})$.
In order to show that $\mathcal{M}(v_{-},v_{+})$ has virtual dimension
$\mu_{\textrm{Rab}}(v_{-})-\mu_{\textrm{Rab}}(v_{+})-1$ we need to
show that the linearization $D_{u}$ of $\bar{\partial}_{\mathcal{A}_{k}}$
at any $u\in\mathcal{M}(v_{-},v_{+})$ is Fredholm, and compute its
index. For simplicity we will prove the theorem only in the case where
both $\eta_{-}$ and $\eta_{+}$ are positive. The case when $\eta_{\pm}$
have arbitrary sign is analogous.

Suppose $u=(x,\eta)\in\mathcal{M}(v_{-},v_{+})$. It is well known
that any such map $u$ extends to a map $u:\overline{\mathbb{R}}\rightarrow\Lambda T^{*}M\times\mathbb{R}$,
which we continue to denote by $u$. Fix $r\geq2$. The linearization
of $D_{u}$ of $\bar{\partial}_{\mathcal{A}_{k}}$ at $u$ is given
by: \[
D_{u}:W^{1,r}(\mathbb{R}\times S^{1},x^{*}TT^{*}M)\times W^{1,r}(\mathbb{R},\mathbb{R})\rightarrow L^{r}(\mathbb{R}\times S^{1},x^{*}TT^{*}M)\times L^{r}(\mathbb{R},\mathbb{R});\]
 \[
D_{u}\left(\begin{array}{c}
\xi\\
b
\end{array}\right)=\left(\begin{array}{c}
\nabla_{s}\xi+J(x)\nabla_{t}\xi+\nabla_{\xi}J(\dot{x})-\eta\nabla_{\xi}\nabla H(x)-b\nabla H(x)\\
\partial_{s}b-\int_{0}^{1}\left\langle \nabla H,\xi\right\rangle _{J}dt
\end{array}\right).\]
 In order to compute the index we express $D_{u}$ in a local trivialization.
Let $\varphi:\overline{\mathbb{R}}\times S^{1}\times\mathbb{R}^{2n}\rightarrow x^{*}TT^{*}M$
denote a symplectic trivialization of the pullback bundle over $\overline{\mathbb{R}}\times S^{1}$,
and denote by $\varphi_{\pm}:=\varphi|_{\{\pm\infty\}\times S^{1}}$
the induced symplectic trivializations of the pullback bundles $x_{\pm}^{*}TT^{*}M$.
Let $\widehat{\varphi}:\overline{\mathbb{R}}\times S^{1}\times\mathbb{R}^{2n}\times\mathbb{R}\rightarrow x^{*}TT^{*}M\times\eta^{*}T\mathbb{R}$
denote the product trivialization of $\varphi$ together with some
trivialization of the bundle $\eta^{*}T\mathbb{R}\rightarrow\overline{\mathbb{R}}$.
We now conjugate $D_{u}$ via $\widehat{\varphi}$ to obtain an operator
of the form\[
D_{A_{h}}:=\widehat{\varphi}^{-1}\circ D_{u}\circ\widehat{\varphi};\]
 \[
D_{A_{h}}:W^{1,r}(\mathbb{R}\times S^{1},\mathbb{R}^{2n})\times W^{1,r}(\mathbb{R},\mathbb{R})\rightarrow L^{r}(\mathbb{R}\times S^{1},\mathbb{R}^{2n})\times L^{r}(\mathbb{R},\mathbb{R});\]
 \[
D_{A_{h}}\left(\begin{array}{c}
v\\
b
\end{array}\right)(s)=(\partial_{s}+A_{h})\left(\begin{array}{c}
v\\
b
\end{array}\right)(s)=\left(\begin{array}{c}
\partial_{s}v(s)+A(s)(v)+bh(s)\\
\partial_{s}b(s)+\int_{0}^{1}\left\langle h(s),v\right\rangle dt
\end{array}\right).\]
 Here\begin{equation}
A(s,t):=J_{0}\partial_{t}+S(s,t),\label{eq:A}\end{equation}
 where\[
S(s,t)\in C^{\infty}(\mathbb{R}\times S^{1},\mathbf{L}(\mathbb{R}^{2n},\mathbb{R}^{2n}))\]
 is defined by\begin{equation}
S(s,t)=\varphi^{-1}\circ\left(\nabla_{s}\varphi+J(x)\nabla_{t}\varphi-\nabla_{\varphi}J(\dot{x})-\eta\nabla_{\varphi}\nabla H(x)\right)(s,t),\label{eq:S}\end{equation}
 and $J_{0}=\left(\begin{array}{cc}
0 & -\mathbb{1}\\
\mathbb{1} & 0
\end{array}\right)$ is the standard almost complex structure on $\mathbb{R}^{2n}$.

Secondly \[
h\in C^{\infty}(\mathbb{R}\times S^{1},\mathbb{R}^{2n})\]
 is the vector valued function \begin{equation}
h(s,t):=\varphi^{-1}\circ(-\nabla H(x))(s,t).\label{eq:B}\end{equation}
 As it stands the operator $D_{A_{h}}$ is \textbf{not }Fredholm.
This can be rectified however by defining $D_{A_{h}}$ instead on
a suitably \textbf{weighted Sobolev space}. This is explained in \cite[Lemma A.13]{Frauenfelder2004}.
Since the kernel and the cokernel of $D_{A_{h}}$ are spanned by smooth
elements, the index of $D_{A_{h}}$ does not depend on $r$, and hence
it suffices to consider the case $r=2$. In this case a well known
result of Robbin and Salamon \cite[Theorem 4.21]{RobbinSalamon1995}
tells us that the Fredholm index of $D_{A_{h}}$ is given by spectral
flow of $A_{h}$ :\[
\mbox{ind}(D_{A_{h}})=\mu_{\textrm{SF}}(A_{h}).\]
 We therefore need to compute this spectral flow. Note that the limit
operators are given by: \[
A^{\pm}=\varphi_{\pm}^{-1}\circ\nabla^{2}\mathcal{A}_{k}^{\eta_{\pm}}(x_{\pm})\circ\varphi_{\pm};\]
 \[
h^{\pm}:=\varphi_{\pm}^{-1}\circ(-\nabla H(x_{\pm})).\]
The computation in the next subsection will show that the triple $(A^{\pm},h^{\pm},0)$
is regular in the sense of Definition \ref{def:regular}, that is,
the correction terms $\lambda_{A^{\pm},h^{\pm}}$ are non-zero. Thus
by Theorem \ref{thm:CF} the spectral flow of $A_{h}$ is given by
\begin{equation}
\mu_{\textrm{SF}}(A_{h})=\mu_{\textrm{SF}}(A)-\frac{1}{2}\mbox{sign}(\lambda_{A^{+},h^{+}})+\frac{1}{2}\mbox{sign}(\lambda_{A^{-},h^{-}}).\label{eq:formula}\end{equation}
 A well known result of Salamon and Zehnder \cite{SalamonZehnder1992}
tells us that\[
\mu_{\textrm{SF}}(A)=\mu_{\textrm{CZ}}(y_{-})-\mu_{\textrm{CZ}}(y_{+})-1,\]
 where $y_{\pm}(t):=y_{\pm}(t/\eta_{\pm})$. Here the {}``$-1$''
in the formula comes from the fact that the $y_{\pm}$ have Floquet
multipliers equal to $1$ (see Theorem 4.2 and Lemma 4.4 in \cite{CieliebakFrauenfelder2009}).
In order to complete the proof of Theorem \ref{thm:rab index-1} we
therefore need to prove that \begin{equation}
\mbox{sign}(\lambda_{A^{\pm},h^{\pm}})=-\chi(x_{\pm},\eta_{\pm}).\label{eq:to prove}\end{equation}

\subsection{Computing $\lambda_{A^{\pm},h^{\pm}}$}

$\ $\vspace{6pt}

Let us start now with a critical point $(x_{k},\eta(k))\in\mbox{Crit}(\mathcal{A}_{k})$
that admits a non-degenerate orbit cylinder. Thus there exists $\varepsilon>0$
and a family $(x_{k+s},\eta(k+s))$ of critical points of $\mathcal{A}_{k+s}$
for $s\in(-\varepsilon,\varepsilon)$, with $\eta(k+s)>0$ for each
$s\in(-\varepsilon,\varepsilon)$ and $\eta'(k)\ne0$. Let \[
\xi_{k}(t):=\frac{\partial}{\partial s}\Bigl|_{s=0}x_{k+s}(t).\]
We will prove below that:\begin{equation}
\nabla^{2}\mathcal{A}_{k}^{\eta(k)}(x_{k})(\xi_{k})=\eta'(k)\nabla H(x_{k}).\label{eq:keyrab}\end{equation}
Equation \eqref{eq:to prove} readily follows from \eqref{eq:keyrab},
as we compute: \begin{align*}
\lambda_{\nabla^{2}\mathcal{A}_{k}^{\eta(k)}(x_{k}),-\nabla H(x_{k}),}: & =\int_{0}^{1}\left\langle -\frac{1}{\eta'(k)}\xi_{k},-\nabla H(x_{k})\right\rangle dt\\
 & =\frac{1}{\eta'(k)}\int_{0}^{1}d_{x_{k}}H\left(\frac{\partial}{\partial s}\Bigl|_{s=0}x_{k+s}(t)\right)dt\\
 & =\frac{1}{\eta'(k)}\int_{0}^{1}\frac{\partial}{\partial s}\Bigl|_{s=0}H(x_{k+s}(t))dt\\
 & =\frac{1}{\eta'(k)}\int_{0}^{1}\frac{\partial}{\partial s}\Bigl|_{s=0}\{k+s\}dt\\
 & =\frac{1}{\eta'(k)},
\end{align*}
and hence\[
\mbox{sign}(\lambda_{\nabla^{2}\mathcal{A}_{k}^{\eta(k)}(x_{k}),-\nabla H(x_{k})}):=\mbox{sign}(\eta'(k))=-\chi(x_{k},\eta(k)).\]
It remains therefore to check \eqref{eq:keyrab}. Set $\rho_{k}:=\nabla^{2}\mathcal{A}_{k}^{\eta(k)}(x_{k})(\xi_{k})-\eta'(k)\nabla H(x_{k})$;
we show $\rho_{k}=0$. We proceed as in Subsection \ref{sub:The-proof-of}.
The assertion is local, and hence we may assume $T^{*}M=\mathbb{R}^{2n}$.
For any $(\xi,b)\in W^{1,2}(S^{1},\mathbb{R}^{2n})\times\mathbb{R}$
we have\[
d_{(x_{k+s},\eta(k+s))}\mathcal{A}_{k+s}(\xi,b)=0,\]
and differentiating this equation with respect to $s$ and setting
$s=0$ we discover\begin{align*}
0 & =\frac{\partial}{\partial s}\Bigl|_{s=0}d_{(x_{k+s},\eta(k+s))}\mathcal{A}_{k+s}(\xi,b)\\
 & =\frac{\partial}{\partial s}\Bigl|_{s=0}d_{(x_{k},\eta(k))}\mathcal{A}_{k+s}(\xi,b)+d_{(x_{k},\eta(k))}^{2}\mathcal{A}_{k}((\xi_{k},\eta'(k)),(\xi,b))\\
 & =b+d_{(x_{k},\eta(k))}^{2}\mathcal{A}_{k}((\xi_{k},\eta'(k)),(\xi,b)),
\end{align*}
where the last line used a similar argument to \eqref{eq:why it's b}.

Taking $(\xi,b)=(\rho_{k},0)$ we see that\begin{align*}
0 & =\left\langle \left\langle \nabla^{2}\mathcal{A}_{k}(x_{k},\eta'(k))(\xi_{k},\eta'(k)),(\rho_{k},0)\right\rangle \right\rangle _{J}\\
 & \overset{(*)}{=}\left\langle \left\langle \left(\begin{array}{cc}
\nabla^{2}\mathcal{A}_{k}^{\eta(k)}(x_{k}) & -\nabla H(x_{k})\\
-\nabla H(x_{k})^{*} & 0
\end{array}\right)\left(\begin{array}{c}
\xi_{k}\\
\eta'(k)
\end{array}\right),\left(\begin{array}{c}
\rho_{k}\\
0
\end{array}\right)\right\rangle \right\rangle _{J}\\
 & =\left\langle \left\langle \rho_{k},\rho_{k}\right\rangle \right\rangle _{J},
\end{align*}
where $(*)$ used \eqref{eq:hessian}. Thus $\rho_{k}=0$ as desired.
This concludes the proof of \eqref{eq:to prove} and hence of Theorem
\ref{thm:rab index-1}.

\subsection{Proving that the index $\mu_{\textrm{Rab}}$ does not depend on the
choice of Hamiltonian}

\label{subsection:mu}

$\ $\vspace{6pt}

In this section we prove that the grading $\mu_{\textrm{Rab}}$ from
Definition \ref{defn:mu} does not depend on the choice of Hamiltonian
$H$ representing $\Sigma$. We also explain how is $\mu_{\textrm{Rab}}$
related to a suitably defined \textbf{transverse} Conley-Zehnder index.\\

We begin in a more general situation. Let $\Sigma\subseteq T^{*}M$
be a closed oriented separating hypersurface of $T^{*}M$, where the
latter is endowed with a twisted symplectic form $\omega$. Let $\mathcal{L}$
denote the characteristic line bundle of $\Sigma$. Note that $\mathcal{L}$
carries a natural orientation. Denote by $\mathcal{Q}$ the quotient
bundle $T\Sigma/\mathcal{L}$ with projection $p:T\Sigma\to\mathcal{Q}$.
Thus $\mathcal{Q}$ is a rank $2n-2$ bundle over $\Sigma$, which
carries the structure of a symplectic vector bundle. Let $T^{v}T^{*}M\subseteq TT^{*}M$
denote the vertical distribution defined by $T_{(q,p)}^{v}T^{*}M:=T_{(q,p)}T_{q}^{*}M$,
and denote by $\mathcal{V}\subseteq\mathcal{Q}$ the Lagrangian subbundle
of $\mathcal{Q}$ defined by $\mathcal{V}_{x}:=p\left(T_{x}^{v}T^{*}M\cap T_{x}\Sigma\right)$.

Suppose $y:\mathbb{R}/T\mathbb{Z}\rightarrow\Sigma$ is a smooth curve.
The bundle $y^{*}\mathcal{Q}$ over $\mathbb{R}/T\mathbb{Z}$ is trivial.
Let us say that a trivialization $\Psi:\mathbb{R}/T\mathbb{Z}\times\mathbb{R}^{2n-2}\rightarrow y^{*}\mathcal{Q}$
is \textbf{$\mathcal{V}$-preserving }if $\Psi(t)(V^{n-1})=\mathcal{V}_{y(t)}$
for each $t\in\mathbb{R}/T\mathbb{Z}$, where $V^{n-1}:=(0)\times\mathbb{R}^{n-1}$.
Such trivializations always exist; this is proved by first choosing
a trivialization $\Psi'$ of the trivial bundle $y^{*}\mathcal{V}$
and extending it to a trivialization $\Psi$ of $y^{*}\mathcal{Q}$
by making use of a compatible almost complex structure on $\mathcal{Q}$
to write $\mathcal{Q}=\mathcal{V}\oplus J\mathcal{V}$ (see for instance
\cite[Lemma 1.2]{AbbondandoloSchwarz2006}).

\begin{defn} A \textbf{defining Hamiltonian }for $\Sigma$ is an
autonomous Hamiltonian $H\in C^{\infty}(T^{*}M,\mathbb{R})$ such
that $\Sigma=H^{-1}(0)$ and $X_{H}|_{\Sigma}$ is a positively oriented
non-vanishing section of $\mathcal{L}$. Denote by $\mathcal{D}(\Sigma)$
the set of all defining Hamiltonians. Note that $\mathcal{D}(\Sigma)$
is a convex set. If $H\in\mathcal{D}(\Sigma)$ and $\phi_{t}^{H}$
denotes the flow of $X_{H}$ then $d\phi_{t}^{H}$ induces a linear
map $Q_{t}^{H}:\mathcal{Q}_{x}\rightarrow\mathcal{Q}_{\phi_{t}^{H}(x)}$
for each $x\in\Sigma$.\end{defn} 

\begin{rem} Our definition of a {}``defining Hamiltonian'' is weaker
than the usual one in Rabinowitz Floer homology (\cite{CieliebakFrauenfelder2009}).
There they ask that not only is $X_{H}|_{\Sigma}$ a positively oriented
non-vanishing section of $\mathcal{L}$, but that it is actually equal
to some given fixed section - in the contact case this is normally
the Reeb vector field. \end{rem} 

Note that if $H$ and $F$ are two elements of $\mathcal{D}(\Sigma)$
then there exists a smooth positive function $f\in C^{\infty}(\Sigma,\mathbb{R}^{+})$
such that $X_{H}|_{\Sigma}=fX_{F}|_{\Sigma}$. Then the Hamiltonian
flows $\phi_{t}^{H}|_{\Sigma}$ and $\phi_{t}^{F}|_{\Sigma}$ are
related by \[
\phi_{t}^{H}(x)=\phi_{\alpha(t,x)}^{F}(x)\ \ \ \mbox{for }x\in\Sigma.\]
 where \[
\alpha(t,x)=\int_{0}^{t}f(\phi_{s}^{H}(x))ds.\]
 Suppose $y:\mathbb{R}/T\mathbb{Z}\rightarrow\Sigma$ is a periodic
orbit of $X_{H}$. Let $\beta:[0,\alpha(T,y(0))]\rightarrow[0,T]$
be the inverse of the function $t\mapsto\alpha(t,y(0))$. Then the
curve $z:\mathbb{R}/(\alpha(T,y(0))\mathbb{Z}\rightarrow\Sigma$ defined
by $z(t)=y(\beta(t))$ is a periodic orbit of $X_{F}$. We will say
that $z$ is the orbit of $X_{F}$ corresponding to the orbit $y$
of $X_{H}$. \begin{defn} Let $H\in\mathcal{D}(\Sigma)$. Suppose
$y:\mathbb{R}/T\mathbb{Z}\rightarrow\Sigma$ is a periodic orbit of
$X_{H}$. Let $\Psi$ denote a $\mathcal{V}$-preserving trivialization
of $y^{*}\mathcal{Q}$, and denote by $\lambda_{y,\Psi}:[0,T]\rightarrow\mbox{Sp}(2n-2)$
the path \[
\lambda_{y,\Psi}(t):=\Psi(t)^{-1}\circ Q_{t}^{H}\circ\Psi(0).\]
 We denote by $\mu_{\textrm{CZ}}^{\tau}(y)$ the \textbf{transverse
Conley-Zehnder index} of $y$,\textbf{ }which by definition is given
by \[
\mu_{\textrm{CZ}}^{\tau}(y)=\mu_{\textrm{Ma}}(\mbox{graph}(\lambda_{y,\Psi}),\Delta_{\mathbb{R}^{2n-2}})\in\frac{1}{2}\mathbb{Z},\]
 where $\mu_{\textrm{Ma}}(\cdot,\cdot)$ denotes the \textbf{relative
Maslov index }as defined in \cite{RobbinSalamon1993} and $\Delta_{\mathbb{R}^{2n-2}}$
denotes the diagonal in $\mathbb{R}^{2n-2}$. Note however that our
sign conventions for $\mu_{\textrm{Ma}}(\cdot,\cdot)$ match those
of \cite{AbbondandoloPortaluriSchwarz2008} not \cite{RobbinSalamon1993}.
As the notation suggests, the index $\mu_{\textrm{CZ}}^{\tau}(y)$
is independent of the choice of $\mathcal{V}$-preserving trivialization
$\Psi$; this can be proved in exactly the same way as \cite[Lemma 1.3]{AbbondandoloSchwarz2006}.
\end{defn} The following lemma is immediate from the homotopy invariance
of $\mu_{\textrm{Ma}}(\cdot,\cdot)$ and the fact that $\mathcal{D}(\Sigma)$
is convex. 

\begin{lem} \label{lem:doesntdepend}Let $H,F\in\mathcal{D}(\Sigma)$.
Let $y$ denote a periodic orbit of $X_{H}$ contained in $\Sigma$,
and let $z$ denote the periodic orbit of $X_{F}$ corresponding to
$y$. Then \[
\mu_{\textrm{\emph{CZ}}}^{\tau}(y)=\mu_{\textrm{\emph{CZ}}}^{\tau}(z).\]

\end{lem}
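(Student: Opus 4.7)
The strategy is to exploit the convexity of $\mathcal{D}(\Sigma)$ to construct a one-parameter family of paths in $\mbox{Sp}(2n-2)$ interpolating between those used to compute $\mu_{\textrm{CZ}}^{\tau}(y)$ and $\mu_{\textrm{CZ}}^{\tau}(z)$, and then to appeal to the homotopy invariance of the relative Maslov index.

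First I would set $H_s := (1-s)H + sF$ for $s \in [0,1]$. Since $H|_\Sigma = F|_\Sigma = 0$ we have $H_s|_\Sigma = 0$, and $X_{H_s}|_\Sigma = (1-s)X_H|_\Sigma + s X_F|_\Sigma$. As both $X_H|_\Sigma$ and $X_F|_\Sigma$ are positively oriented non-vanishing sections of $\mathcal{L}$, so is $X_{H_s}|_\Sigma$, and hence $H_s \in \mathcal{D}(\Sigma)$. All the flows $\phi_t^{H_s}$ on $\Sigma$ are reparametrisations of one another; in particular, for each $s$ there is a unique periodic orbit $y_s:\mathbb{R}/T_s\mathbb{Z} \to \Sigma$ of $X_{H_s}$ with $y_s(0) = y(0)$ having the same image as $y$, depending smoothly on $s$, and satisfying $y_0 = y$, $y_1 = z$.

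Next, I would pick a $\mathcal{V}$-preserving trivialisation $\widetilde{\Psi}$ of the pullback of $\mathcal{Q}$ over the common image of the orbits $y_s$ in $\Sigma$. Composing with the appropriate reparametrisations yields a $\mathcal{V}$-preserving trivialisation $\Psi_s$ of $y_s^{*}\mathcal{Q}$ for each $s$, with $\Psi_0$ and $\Psi_1$ matching the trivialisations used in defining $\mu_{\textrm{CZ}}^{\tau}(y)$ and $\mu_{\textrm{CZ}}^{\tau}(z)$ respectively (up to an orientation-preserving reparametrisation). Rescaling time, one obtains the continuous family of paths
\[
\widetilde{\lambda}_s(u) := \lambda_{y_s,\Psi_s}(T_s u), \qquad u \in [0,1],
\]
in $\mbox{Sp}(2n-2)$.

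Finally, by the homotopy invariance of the relative Maslov index, the integer $\mu_{\textrm{Ma}}(\mbox{graph}(\widetilde{\lambda}_s), \Delta_{\mathbb{R}^{2n-2}})$ is independent of $s$. Since the relative Maslov index is also invariant under orientation-preserving reparametrisation of the time interval (this is immediate from its crossing-form definition), comparing $s=0$ and $s=1$ gives $\mu_{\textrm{CZ}}^{\tau}(y) = \mu_{\textrm{CZ}}^{\tau}(z)$. The only real technical point is verifying the continuous dependence of the family $(y_s, \Psi_s)$ on $s$, which follows routinely from the smooth dependence on $s$ of solutions to the ODE $\dot{x} = X_{H_s}(x)$ together with the construction of $\Psi_s$ from a single trivialisation $\widetilde{\Psi}$.
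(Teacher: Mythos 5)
Your proof is correct and follows exactly the argument the paper invokes: the paper states that the lemma is immediate from the homotopy invariance of the relative Maslov index $\mu_{\textrm{Ma}}(\cdot,\cdot)$ together with the convexity of $\mathcal{D}(\Sigma)$, and your proposal fills in precisely this outline by constructing the linear interpolation $H_s = (1-s)H + sF$, the associated family of reparametrized orbits $y_s$, and a continuous family of $\mathcal{V}$-preserving trivializations coming from one fixed trivialization over the common image. The one implicit point worth flagging (though not a gap) is that using a single trivialization $\widetilde{\Psi}$ over the image of $y$ also guarantees the endpoint $\lambda_{y_s,\Psi_s}(T_s)$ is independent of $s$ — the linearized return maps $Q^{H_s}_{T_s}$ on $\mathcal{Q}_{y(0)}$ all coincide, since the different flows $\phi_t^{H_s}|_\Sigma$ are time reparametrizations of each other and the discrepancy in their differentials lands in $\mathcal{L}$ — which is what justifies applying homotopy invariance (a free homotopy of Lagrangian paths that moves an endpoint across strata need not preserve the Maslov index).
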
 %
\begin{comment}
Let $H\in\mathcal{D}(\Sigma)$. Recall from the introduction that
we say that an orbit $y:\mathbb{R}/T\mathbb{Z}\rightarrow\Sigma$
of $X_{H}$ is weakly non-degenerate if $y$ has precisely two Floquet
multipliers equal to $1$, and that this implies the existence of
an orbit cylinder $\mathcal{O}$ about $y$. That is, a smooth (in
$s$) family of curves $y_{s}:\mathbb{R}/T(s)\mathbb{Z}\rightarrow T^{*}M$
for $s\in(-\varepsilon,\varepsilon)$, such that $y_{s}$ is an orbit
of $X_{H}$ with $y_{0}=y$ and $H(y_{s})\equiv s$. Moreover $\mathcal{O}$
is unique.

The following lemma is also straightforward. Note however that the
corresponding statement with {}``orbit cylinder'' replaced with
{}``non-degenerate orbit cylinder'' is \textbf{not }true.

\begin{lem} Let $H\in\mathcal{D}(\Sigma)$. If $y$ is a periodic
orbit of $X_{H}$ contained in $\Sigma$ that admits an orbit cylinder,
and $F$ is another defining Hamiltonian for $\Sigma$ then the orbit
$z$ of $X_{F}$ corresponding to $y$ also admits an orbit cylinder.
\end{lem} 
\end{comment}
{}

Assume now that $H\in\mathcal{D}(\Sigma)$ and $y$ is a periodic
orbit of $X_{H}$ contained in $\Sigma$ admitting an orbit cylinder
$\mathcal{O}=(y_{s})_{s\in(-\varepsilon,\varepsilon)}$. Associated
to such an orbit cylinder $\mathcal{O}$ is the map \[
\xi(t):=\frac{\partial}{\partial s}\Bigl|_{s=0}y_{s}(t).\]
 By differentiating the equation $H(y_{s})\equiv s$ we see that \[
d_{y(t)}H(\xi(t))\equiv1.\]
The tangent space $T_{y(t)}\mathcal{O}$ is spanned by $X_{H}(y(t))$
and $\xi(t)$, and forms a symplectic vector subspace of $T_{y(t)}T^{*}M$.
Moreover by differentiating the equation\[
\phi_{T(s)}^{H}(y_{s}(0))=y_{s}(T(s))=y_{s}(0)\]
with respect to $s$ and setting $s=0$ we obtain
\begin{equation}
\xi(T)+T'(0)X_{H}(y(T))=\xi(0).\label{eq:why it is a triv}\end{equation}
Also, by differentiating the equation\[
\phi_{t}^{H}(y_{s}(0))=y_{s}(t)\]
with respect to $s$ and setting $s=0$ we obtain
\begin{equation}
d_{y(0)}\phi_{t}^{H}(\xi(0))=\xi(t).\label{eq:sym map}\end{equation}

Let us denote by $E_{y(t)}:=(T_{y(t)}\mathcal{O})^{\perp}$ the $\omega$-orthogonal
complement of $T_{y(t)}\mathcal{O}$ in $T_{y(t)}T^{*}M$. We denote
by $y^{*}T\mathcal{O}$ and $y^{*}E$ the induced bundles over $\mathbb{R}/T\mathbb{Z}$.
The following lemma is clear. \begin{lem} There is a natural isomorphism
of symplectic vector bundles over $\mathbb{R}/T\mathbb{Z}$: \[
y^{*}\mathcal{Q}\cong y^{*}E.\]

\end{lem}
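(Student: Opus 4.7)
The plan is to build the isomorphism fiberwise as the composition of the inclusion $E_{y(t)} \hookrightarrow T_{y(t)}\Sigma$ with the quotient projection $p: T_{y(t)}\Sigma \to \mathcal{Q}_{y(t)}$, and then check that (i) each fiber map is a linear isomorphism, (ii) it respects the symplectic structures, and (iii) it depends smoothly on $t$.

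The first step is to verify that $E_{y(t)} \subseteq T_{y(t)}\Sigma$ for every $t$. Suppose $v \in E_{y(t)} = (T_{y(t)}\mathcal{O})^\perp$. Since $X_H(y(t)) \in T_{y(t)}\mathcal{O}$, we have $\omega(v, X_H(y(t))) = 0$. But by the definition of the Hamiltonian vector field this quantity equals $-d_{y(t)}H(v)$, so $d_{y(t)}H(v) = 0$, which means $v \in T_{y(t)}\Sigma$. Thus $E_{y(t)}$ is a linear subspace of $T_{y(t)}\Sigma$.

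The second step is to observe that $E_{y(t)} \cap \mathcal{L}_{y(t)} = 0$. Indeed, $\mathcal{L}_{y(t)} = \mathbb{R}\cdot X_H(y(t)) \subseteq T_{y(t)}\mathcal{O}$, while $E_{y(t)}$ is a symplectic complement of the symplectic subspace $T_{y(t)}\mathcal{O}$ (recall from the excerpt that $T_{y(t)}\mathcal{O}$ is symplectic, being spanned by $X_H(y(t))$ and $\xi(t)$ with $\omega(X_H(y(t)),\xi(t)) = -d_{y(t)}H(\xi(t)) = -1 \neq 0$). Hence $E_{y(t)} \cap T_{y(t)}\mathcal{O} = 0$, and in particular $E_{y(t)} \cap \mathcal{L}_{y(t)} = 0$. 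It follows that the composition
\[
\Phi_t : E_{y(t)} \hookrightarrow T_{y(t)}\Sigma \xrightarrow{\ p\ } \mathcal{Q}_{y(t)}
\]
has trivial kernel. A dimension count gives $\dim E_{y(t)} = 2n - 2 = \dim \mathcal{Q}_{y(t)}$, so $\Phi_t$ is a linear isomorphism.

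Since the family $\{T_{y(t)}\mathcal{O}\}_t$ varies smoothly in $t$, so does its symplectic orthogonal $\{E_{y(t)}\}_t$, and hence $\Phi_t$ fits together into a smooth bundle isomorphism $\Phi : y^*E \to y^*\mathcal{Q}$. To check that $\Phi$ is symplectic, recall that the symplectic form on $\mathcal{Q}$ is induced from $\omega$ via $\omega_{\mathcal{Q}}([v],[w]) := \omega(v,w)$, which is well-defined because $\mathcal{L}$ is the radical of $\omega|_{T\Sigma}$. For $v,w \in E_{y(t)}$ we therefore have $\omega_{\mathcal{Q}}(\Phi_t(v),\Phi_t(w)) = \omega(v,w)$, which is precisely the symplectic form that $E_{y(t)}$ inherits as a symplectic subspace of $T_{y(t)}T^*M$. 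No step here is really an obstacle; the only point that requires a small argument is the inclusion $E \subseteq T\Sigma$, which hinges on the identity $\omega(\cdot,X_H) = -dH$ together with $X_H \in T\mathcal{O}$.
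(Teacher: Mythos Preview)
Your argument is correct and is precisely the natural one: the paper itself offers no proof, declaring the lemma ``clear'' and moving on. Your fiberwise map $E_{y(t)}\hookrightarrow T_{y(t)}\Sigma\to\mathcal{Q}_{y(t)}$, together with the observations that $E_{y(t)}\subseteq T_{y(t)}\Sigma$ (from $X_H\in T\mathcal{O}$) and $E_{y(t)}\cap\mathcal{L}_{y(t)}=0$ (from $T\mathcal{O}$ being symplectic), is exactly what one writes down to justify the statement.
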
 

Define a trivialization $\Phi:\mathbb{R}/T\mathbb{Z}\times\mathbb{R}^{2}\rightarrow y^{*}T\mathcal{O}$
by \[
\Phi(t)(0,1):=X_{H}(y(t)),\ \ \ \Phi(t)(1,0):=\xi(t)+\frac{tT'(0)}{T}X_{H}(y(t))\]
(equation \eqref{eq:why it is a triv} shows that this is well defined,
i.e. $\Phi(T)=\Phi(0)$). Using \eqref{eq:sym map} and the fact that
$d_{y(0)}\phi_{t}^{H}(X_{H}(y(0)))=X_{H}(y(t))$, we deduce that the
curve $\lambda_{y,\Phi}:[0,T]\rightarrow\mbox{Sp}(2)$ defined by
\[
\lambda_{y,\Phi}(t):=\Phi(t)^{-1}\circ d_{y(0)}\phi_{t}^{H}|_{T\mathcal{O}}\circ\Phi(0)\]
 is given by the matrix \[
\left(\begin{array}{cc}
1 & 0\\
-\frac{tT'(0)}{T} & 1
\end{array}\right).\]
By a well-known computation
(see for instance \cite[Lemma 4.3]{CieliebakFrauenfelder2009}):
\begin{equation}
\mu_{\textrm{Ma}}(\mbox{graph}(\lambda_{y,\Phi}),\Delta_{\mathbb{R}^{2}})=\begin{cases}
\frac{1}{2}\mbox{sign}(-T'(0)) & T'(0)\ne0\\
0 & T'(0)=0.
\end{cases}\label{eq:sign T dash}\end{equation}
 We can identify $\mathcal{V}$-preserving trivializations of $y^{*}\mathcal{Q}$
with trivializations $\Psi:\mathbb{R}/T\mathbb{Z}\times\mathbb{R}^{2n-2}\rightarrow y^{*}E$
with the property that \[
\Psi(t)(V^{n-1})=E_{y(t)}\cap((T_{y(t)}\Sigma\cap T_{y(t)}^{v}T^{*}M)\oplus\mathcal{L}_{y(t)}).\]
 Thus if $\Psi$ is a such a trivialization and $\lambda_{y,\Psi}:[0,T]\rightarrow\mbox{Sp}(2n-2)$
denotes the curve defined by \[
\lambda_{y,\Psi}(t):=\Psi(t)^{-1}\circ d_{y(0)}\phi_{t}^{H}|_{E_{y(t)}}\circ\Psi(0).\]
 then \[
\mu_{\textrm{CZ}}^{\tau}(y)=\mu_{\textrm{Ma}}(\mbox{graph}(\lambda_{y,\Psi}),\Delta_{\mathbb{R}^{2n-2}}).\]
 We can glue the two trivializations $\Phi$ and $\Psi$ together
to get a trivialization $\Phi\oplus\Psi$ of the full pullback bundle
$y^{*}TT^{*}M$. This trivialization has the property that \begin{equation}
(\Phi\oplus\Psi)(t)(V^{n})=(T_{y(t)}\Sigma\cap T_{y(t)}^{v}T^{*}M)\oplus\mathcal{L}_{y(t)}.\label{eq:v tilde}\end{equation}
 As before let $\lambda_{y,\Phi\oplus\Psi}:[0,T]\rightarrow\mbox{Sp}(2n)$
denote the curve defined by \[
\lambda_{y,\Phi\oplus\Psi}(t):=(\Phi\oplus\Psi)(t)^{-1}\circ d_{y(0)}\phi_{t}^{H}\circ(\Phi\oplus\Psi)(0).\]
 Let us denote by \[
\widetilde{\mu}_{\textrm{CZ}}(y):=\mu_{\textrm{Ma}}(\mbox{graph}(\lambda_{\Phi\oplus\Psi}),\Delta_{\mathbb{R}^{2n}})\in\frac{1}{2}\mathbb{Z}.\]
 This half-integer is independent of the trivialization $\Phi\oplus\Psi$
in the sense that if $\Theta$ is any other trivialization of $y^{*}TT^{*}M$
satisfying \eqref{eq:v tilde} then \[
\mu_{\textrm{Ma}}(\mbox{graph}(\lambda_{\Theta}),\Delta_{\mathbb{R}^{2n}})=\mu_{\textrm{Ma}}(\mbox{graph}(\lambda_{\Phi\oplus\Psi}),\Delta_{\mathbb{R}^{2n}}).\]

Moreover we have by \eqref{eq:sign T dash} and the product axiom
of $\mu_{\textrm{Ma}}(\cdot,\cdot)$ (\cite[Theorem 4.1]{RobbinSalamon1993})
that the following holds. \begin{lem} Let $y$ be a periodic orbit
of $X_{H}$ contained in $\Sigma$ admitting an orbit cylinder. Then
\[
\widetilde{\mu}_{\textrm{\emph{CZ}}}(y)=\mu_{\textrm{\emph{CZ}}}^{\tau}(y)+\frac{1}{2}\mbox{\emph{sign}}(-T'(0)).\]

\end{lem}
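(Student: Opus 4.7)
The plan is to use the block-diagonal structure of the linearised flow together with the direct sum axiom of the relative Maslov index. The key observation is that the symplectic splitting $y^{*}TT^{*}M = y^{*}T\mathcal{O}\oplus y^{*}E$ is preserved by $d_{y(0)}\phi_{t}^{H}$. That $y^{*}T\mathcal{O}$ is invariant follows from equation \eqref{eq:sym map}, the identity $d_{y(0)}\phi_{t}^{H}(X_{H}(y(0)))=X_{H}(y(t))$, and the fact that $T_{y(t)}\mathcal{O}=\mathrm{span}\{X_{H}(y(t)),\xi(t)\}$. Since $\phi_{t}^{H}$ is symplectic and $T\mathcal{O}$ is a symplectic subbundle, the $\omega$-orthogonal complement $E$ is preserved as well.

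Consequently, in the trivialisation $\Phi\oplus\Psi$, the symplectic path $\lambda_{y,\Phi\oplus\Psi}(t)$ is block-diagonal with blocks $\lambda_{y,\Phi}(t)\in\mathrm{Sp}(2)$ and $\lambda_{y,\Psi}(t)\in\mathrm{Sp}(2n-2)$. Under the natural identification of $\mathbb{R}^{2n}\times\mathbb{R}^{2n}$ with $(\mathbb{R}^{2}\times\mathbb{R}^{2})\oplus(\mathbb{R}^{2n-2}\times\mathbb{R}^{2n-2})$, we have $\mathrm{graph}(\lambda_{y,\Phi\oplus\Psi})=\mathrm{graph}(\lambda_{y,\Phi})\oplus\mathrm{graph}(\lambda_{y,\Psi})$ and $\Delta_{\mathbb{R}^{2n}}=\Delta_{\mathbb{R}^{2}}\oplus\Delta_{\mathbb{R}^{2n-2}}$. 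The direct sum (product) axiom of the relative Maslov index \cite[Theorem 4.1]{RobbinSalamon1993} therefore gives
\[
\mu_{\textrm{Ma}}(\mathrm{graph}(\lambda_{y,\Phi\oplus\Psi}),\Delta_{\mathbb{R}^{2n}}) = \mu_{\textrm{Ma}}(\mathrm{graph}(\lambda_{y,\Phi}),\Delta_{\mathbb{R}^{2}}) + \mu_{\textrm{Ma}}(\mathrm{graph}(\lambda_{y,\Psi}),\Delta_{\mathbb{R}^{2n-2}}).
\]

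The first summand was computed in \eqref{eq:sign T dash} to be $\tfrac{1}{2}\mathrm{sign}(-T'(0))$ (the computation applies uniformly, vanishing when $T'(0)=0$), while the second summand is precisely $\mu_{\textrm{CZ}}^{\tau}(y)$, since $\Psi$, viewed as a trivialisation of $y^{*}\mathcal{Q}\cong y^{*}E$, is $\mathcal{V}$-preserving by the characterisation just before the statement. Combining these identifications with the definition of $\widetilde{\mu}_{\textrm{CZ}}(y)$ on the left-hand side yields the claimed formula.

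No step here is substantial: the only thing to check carefully is the invariance of the splitting under $d\phi_{t}^{H}$, which is immediate from the construction of the orbit cylinder, and the matching of $\mathcal{V}$-preserving trivialisations of $y^{*}\mathcal{Q}$ with trivialisations of $y^{*}E$ having the property displayed just before \eqref{eq:v tilde}. Once these compatibilities are in place, the result is a one-line consequence of the direct sum axiom.
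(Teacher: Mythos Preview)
Your proof is correct and follows exactly the approach indicated in the paper: the sentence immediately preceding the lemma says that the result follows from \eqref{eq:sign T dash} and the product axiom of $\mu_{\textrm{Ma}}(\cdot,\cdot)$ from \cite[Theorem~4.1]{RobbinSalamon1993}, and you have simply unpacked this in detail. Your explicit verification that the splitting $y^{*}T\mathcal{O}\oplus y^{*}E$ is invariant under $d\phi_{t}^{H}$ is a welcome addition, since the paper leaves this implicit.
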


In the statement of the lemma, if $T'(0)=0$ we understand that $\mbox{{\rm {sign}}}(-T'(0))=0$.

We now relate $\widetilde{\mu}_{\textrm{CZ}}(y)$ with the \textbf{Conley-Zehnder
index }$\mu_{\textrm{CZ}}(y)$. By definition \[
\mu_{\textrm{CZ}}(y)=\mu_{\textrm{Ma}}(\mbox{graph}(\lambda_{\Omega}),\Delta_{\mathbb{R}^{2n}})\]
 where $\Omega:\mathbb{R}/T\mathbb{Z}\times\mathbb{R}^{2n}\rightarrow y^{*}TT^{*}M$
is any \textbf{vertical preserving trivialization}, that is for all
$t\in\mathbb{R}/T\mathbb{Z}$,\begin{equation}
\Omega(t)(V^{n})=T_{y(t)}^{v}T^{*}M,\label{eq:vertical preserving}\end{equation}
 and $\lambda_{\Omega}:[0,T]\rightarrow\mbox{Sp}(2n)$ is defined
analogously to before. As before $\mu_{\textrm{CZ}}(y)$ is independent
of the vertical preserving trivialization $\Omega$.

Let us denote by $m:\pi_{1}(\mbox{Sp}(2n))\rightarrow\mathbb{Z}$
is the classical Maslov index. See for instance \cite[Theorem 2.29]{McDuffSalamon1998}
for the definition of $m$. The loop axion of the Conley-Zehnder index
(see \cite[Section 2.4]{Salamon1999}) together with the product axiom
of $m$ implies that the difference of the indices is given by \[
\widetilde{\mu}_{\textrm{CZ}}(y)-\mu_{\textrm{CZ}}(y)=2m(\Theta^{-1}\circ\Omega),\]
 where $\Theta$ (resp. $\Omega$) is any trivialization of $y^{*}TT^{*}M$
satisfying \eqref{eq:v tilde} (resp. \eqref{eq:vertical preserving}).
This integer visibly depends only on $\Sigma$ and the free homotopy
class of $y$; in fact the function $[y]\mapsto2m(\Theta^{-1}\circ\Omega)$
defines a class $c(\Sigma)\in H^{1}(\Sigma,\mathbb{Z})$.

\begin{cor} \label{indep}Suppose $H\in\mathcal{D}(\Sigma)$ and
$y$ is a periodic orbit of $X_{H}$ contained in $\Sigma$ which
admits an orbit cylinder. Then \[
\mu_{\textrm{\emph{CZ}}}(y)-\frac{1}{2}\mbox{\emph{sign}}(-T'(0))=\mu_{\textrm{\emph{CZ}}}^{\tau}(y)-c(\Sigma)([y]).\]
 The right-hand side is independent of the choice of $H\in\mathcal{D}(\Sigma)$
by Lemma \ref{lem:doesntdepend} (and the right-hand side is defined
even when $y$ does not admit an orbit cylinder), and hence so is
the left-hand side.\\
 \end{cor}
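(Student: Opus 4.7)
The plan is to obtain the desired equality as an immediate consequence of the two identities that have just been established in the preceding paragraphs, and then unpack the independence statement by tracking the intrinsic nature of the right-hand side. The key observation is that the auxiliary index $\widetilde{\mu}_{\textrm{CZ}}(y)$ has been computed in two different ways, giving two expressions which can be set equal to one another.

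First, I would recall the unnamed lemma immediately preceding the corollary, which asserts
\[
\widetilde{\mu}_{\textrm{CZ}}(y)=\mu_{\textrm{CZ}}^{\tau}(y)+\tfrac{1}{2}\mbox{sign}(-T'(0)).
\]
This comes from combining the trivialization $\Phi\oplus\Psi$ (splitting tangentially along the orbit cylinder and transversally in a $\mathcal{V}$-preserving way), the explicit computation \eqref{eq:sign T dash} for the tangential factor, and the product axiom of the Robbin--Salamon relative Maslov index. Second, I would quote the loop/product axiom computation displayed just before the corollary, which yields
\[
\widetilde{\mu}_{\textrm{CZ}}(y)-\mu_{\textrm{CZ}}(y)=2m(\Theta^{-1}\circ\Omega)=c(\Sigma)([y]),
\]
where $\Theta$ satisfies \eqref{eq:v tilde} and $\Omega$ is any vertical-preserving trivialization as in \eqref{eq:vertical preserving}. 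Subtracting these two identities eliminates $\widetilde{\mu}_{\textrm{CZ}}(y)$ and rearranges to give exactly the claimed formula
\[
\mu_{\textrm{CZ}}(y)-\tfrac{1}{2}\mbox{sign}(-T'(0))=\mu_{\textrm{CZ}}^{\tau}(y)-c(\Sigma)([y]).
\]

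It then remains to explain why the right-hand side does not depend on the choice of $H\in\mathcal{D}(\Sigma)$. The index $\mu_{\textrm{CZ}}^{\tau}$ was defined purely in terms of the quotient bundle $\mathcal{Q}=T\Sigma/\mathcal{L}$ and the maps $Q_{t}^{H}$ it induces, via a $\mathcal{V}$-preserving trivialization, so it manifestly depends only on the geometry of $\Sigma$ (and the flow line of $y$, not its parameterization). Lemma \ref{lem:doesntdepend} has already shown that $\mu_{\textrm{CZ}}^{\tau}$ is unchanged if we pass from $H$ to any other $F\in\mathcal{D}(\Sigma)$ and replace $y$ by the corresponding reparametrized orbit $z$ of $X_{F}$. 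The class $c(\Sigma)\in H^{1}(\Sigma,\mathbb{Z})$ was defined intrinsically from $\Sigma$ via the Maslov index of the transition between $\mathcal{V}$-preserving and vertical-preserving trivializations, and $c(\Sigma)([y])$ depends only on the free homotopy class of the unparameterized curve $y$, which coincides with $[z]$. Hence the right-hand side is well-defined without reference to $H$, and is in particular defined even when $y$ does not admit an orbit cylinder, so the equality forces the left-hand side to share this independence.

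The main obstacle in writing this is essentially bookkeeping: the only non-trivial ingredients, namely the splitting of $\widetilde{\mu}_{\textrm{CZ}}(y)$ into a tangential half-integer contribution and the transverse index, and the identification of the vertical-versus-tangent trivialization defect with the cohomology class $c(\Sigma)([y])$, have already been carried out in the text preceding the corollary. Thus the proof is effectively one line of arithmetic plus a short verification that the right-hand side is constructed from objects that depend on $\Sigma$ alone.
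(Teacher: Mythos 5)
Your proof is correct and takes the same route the paper intends: subtract the two displayed identities for $\widetilde{\mu}_{\textrm{CZ}}(y)$ (the lemma giving $\widetilde{\mu}_{\textrm{CZ}}(y)=\mu_{\textrm{CZ}}^{\tau}(y)+\tfrac{1}{2}\mbox{sign}(-T'(0))$ and the relation $\widetilde{\mu}_{\textrm{CZ}}(y)-\mu_{\textrm{CZ}}(y)=c(\Sigma)([y])$) to eliminate $\widetilde{\mu}_{\textrm{CZ}}$, then invoke Lemma~\ref{lem:doesntdepend} and the intrinsic definition of $c(\Sigma)$ for the independence claim. This matches the paper's argument.
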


In the case we will be most interested in $c(\Sigma)$ is always zero.
\begin{lem} Assume that $\Sigma$ is transverse to each fibre $T_{q}^{*}M$
along a closed curve $y:S^{1}\to\Sigma$. Then $c(\Sigma)([y])=0$.
\label{lem:trans} \end{lem}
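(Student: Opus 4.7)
The plan is to interpret $c(\Sigma)([y])$ directly as the relative Maslov index of a loop of Lagrangian pairs in $y^{*}TT^{*}M$ and to show vanishing by symplectic reduction to a rank-$2$ bundle in which the reduced Lagrangians are transverse throughout the loop.

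First I would set $V_{1} := T\Sigma \cap T^{v}T^{*}M$ along $y$. Transversality of $\Sigma$ to the fibres along $y$ says $T\Sigma + T^{v}T^{*}M = TT^{*}M$ along $y$, so by a dimension count $V_{1}$ is a smooth rank $n-1$ subbundle of $y^{*}TT^{*}M$. The same transversality is equivalent to $\partial H/\partial p \neq 0$ along $y$, so $X_{H}$ has non-vanishing horizontal component and $\mathcal{L} \cap T^{v}T^{*}M = 0$. Combining with $\mathcal{L} \subset T\Sigma$ gives $(V_{1} \oplus \mathcal{L}) \cap T^{v}T^{*}M = V_{1}$ pointwise along $y$; in particular $V_{1}\oplus \mathcal{L}$ is a well-defined Lagrangian subbundle of $y^{*}TT^{*}M$, matching $\Theta(t)(V^{n})$.

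Next, using naturality of $\mu_{\textrm{Ma}}$ under the symplectic change of frame $\Theta$, I would rewrite
\[
m(\Theta^{-1} \circ \Omega) \;=\; \mu_{\textrm{Ma}}\bigl(\Omega(t)V^{n},\, \Theta(t)V^{n}\bigr) \;=\; \mu_{\textrm{Ma}}\bigl(T^{v}_{y(t)}T^{*}M,\; V_{1} \oplus \mathcal{L}_{y(t)}\bigr),
\]
so that the computation becomes intrinsic to $y^{*}TT^{*}M$ and no longer references the auxiliary trivializations $\Omega$ and $\Theta$.

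Finally, since $V_{1}$ is isotropic of rank $n-1$ and sits inside both Lagrangian loops with constant intersection equal to $V_{1}$, the symplectic reduction property of the Robbin--Salamon Maslov index reduces the computation to the rank-$2$ symplectic bundle $V_{1}^{\omega}/V_{1}$ over $S^{1}$. In this quotient, the induced Lagrangian lines $\overline{T^{v}T^{*}M}$ and $\overline{\mathcal{L}}$ are transverse throughout the loop---precisely because $V_{1}$ has collapsed the intersection to zero---hence there are no crossings and the Maslov index vanishes. This yields $c(\Sigma)([y]) = 2\,m(\Theta^{-1}\circ\Omega) = 0$. The main point to verify carefully is the symplectic reduction step for $\mu_{\textrm{Ma}}$, but this is a standard consequence of its axiomatic characterization via homotopy and crossing forms; once it is granted, the transversality hypothesis essentially forces the conclusion.
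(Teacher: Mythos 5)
Your argument is correct, and it arrives at the same key observation the paper uses, but via a longer and more structural route. The paper's proof is shorter: after applying \cite[Remark 5.3]{RobbinSalamon1993} to rewrite $2m(\Theta^{-1}\circ\Omega)$ as the relative Maslov index $\mu_{\textrm{Ma}}(\Theta^{-1}\circ\Omega(V^{n}),V^{n})$, it simply notes that the transversality hypothesis forces the path $\Theta(t)^{-1}\circ\Omega(t)$ into the stratum $\mbox{Sp}_{n-1}(2n)$ (constant intersection dimension $n-1$), and then invokes the zero axiom of $\mu_{\textrm{Ma}}$ from \cite[Theorem 4.1]{RobbinSalamon1993}. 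You instead identify this constant intersection explicitly as $V_{1}=T\Sigma\cap T^{v}T^{*}M$, verify that both Lagrangian loops $T^{v}T^{*}M$ and $V_{1}\oplus\mathcal{L}$ contain it, and then perform a symplectic reduction by $V_{1}$ to land in a rank-$2$ bundle where the reduced Lagrangians are transverse and there are no crossings. This is essentially a proof of the zero axiom in the case at hand rather than a citation of it; it is more self-contained (and makes visible exactly where the transversality hypothesis enters, namely in $\mathcal{L}\cap T^{v}T^{*}M=0$), at the cost of invoking the reduction axiom for $\mu_{\textrm{Ma}}$, which is not in \cite{RobbinSalamon1993} in the form you need and would have to be justified separately. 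One small slip: you write $m(\Theta^{-1}\circ\Omega)=\mu_{\textrm{Ma}}(\Omega(t)V^{n},\Theta(t)V^{n})$, but by the cited remark the correct relation carries a factor of $2$, namely $2m(\Theta^{-1}\circ\Omega)=\mu_{\textrm{Ma}}(\Theta^{-1}\circ\Omega(V^{n}),V^{n})$; this does not affect the conclusion since you are proving the right-hand side vanishes, but the displayed identity as written is off by a factor of $2$.
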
 \begin{proof}

By \cite[Remark 5.3]{RobbinSalamon1993} one has \[
2m(\Theta^{-1}\circ\Omega)=\mu_{\textrm{Ma}}(\Theta^{-1}\circ\Omega(V^{n}),V^{n}).\]
 If $\Sigma$ is transverse to $T^{v}T^{*}M$ then the path $\Theta(t)^{-1}\circ\Omega(t)$
takes values in $\mbox{Sp}_{n-1}(2n)\subseteq\mbox{Sp}(2n)$, that
is,\[
\dim\,\Theta(t)^{-1}\circ\Omega(t)(V^{n})\cap V^{n}=n-1\ \ \ \mbox{for all }t\in[0,1].\]
 Thus by the zero axiom of $\mu_{\textrm{Ma}}(\cdot,\cdot)$ (see
\cite[Theorem 4.1]{RobbinSalamon1993}) $\mu_{\textrm{Ma}}(\Theta^{-1}\circ\Omega(V^{n}),V^{n})=0$.\end{proof}

Let us now revert back to the case studied in this note, where $\Sigma=H^{-1}(k)$
for $H$ a Tonelli Hamiltonian. The previous corollary proves that
the grading $\mu_{\textrm{Rab}}$ from Definition \ref{defn:mu} depends
only on $\Sigma$ and not on $H$. In fact in this case $c(\Sigma)=0$
always. The points $(q,p)\in\Sigma$ where $\Sigma$ is not transverse
to $T_{q}^{*}M$ have the form $(q,\mathfrak{L}(q,0))$ (where $\mathfrak{L}$
is the Legendre transform, see \eqref{eq:legendre transform}) and
they form a submanifold $\Sigma_{0}$ of $\Sigma$ of codimension
$n$ if not empty. For $n\geq2$, any closed curve $y:S^{1}\to\Sigma$
can be slightly deformed in $\Sigma$ to a curve that misses $\Sigma_{0}$
and hence from Lemma \ref{lem:trans} we conclude that $c(\Sigma)([y])=0$.
Thus for the class of systems considered in this paper, if $(x,\eta)\in\mbox{Crit}(\mathcal{A}_{k})$
with $\eta>0$, and $y(t):=x(t/\eta)$ then: \[
\mu_{\textrm{Rab}}(x,\eta)=\mu_{\textrm{CZ}}(y)-\frac{1}{2}\mbox{sign}(-T'(0))=\mu_{\textrm{CZ}}^{\tau}(y).\]

\section{An example}

In this section we construct an example of a Hamiltonian $H$ on $T^{*}S^{2}$
with a non-degenerate periodic orbit $y$ of $X_{H}$ with energy
$k>c$ for which the correction term $\chi(y)=-1$. Let $(r,\theta)$
denote polar coordinates on $\mathbb{R}^{2}$, and let $D:=\{(r,\theta)\,:\, r\in(0,4)\}$.
Let $f:(0,\infty)\rightarrow\mathbb{R}$ denote a smooth function
such that $f\equiv0$ on $(0,1]$ and $[3,\infty)$, which will be
specified precisely later. Define $L:TD\rightarrow\mathbb{R}$ by
\[
L(r,\theta,\dot{r},\dot{\theta})=\frac{1}{2}(\dot{r}^{2}+r^{2}\dot{\theta}^{2})-f(r)\dot{\theta}.\]
Let $E:TD\rightarrow\mathbb{R}$ be defined by $E(r,\theta,\dot{r},\dot{\theta})=\frac{1}{2}(\dot{r}^{2}+r^{2}\dot{\theta}^{2})$.

$L$ is the Lagrangian associated to the flat metric and the exact
magnetic form $\sigma=d(-fd\theta)=-f'(r)dr\wedge d\theta$. Let $\sigma_{\textrm{area}}$
denote the area form, so $\sigma_{\textrm{area}}=rdr\wedge d\theta$.
Then $\sigma=-\frac{f'(r)}{r}\sigma_{\textrm{area}}$. Let $F:D\rightarrow\mathbb{R}$
be defined by $F(r,\theta):=-f'(r)/r$. Let $\mathtt{i}:TD\rightarrow TD$
denote rotation by $+\pi/2$.

Suppose $\gamma(t)=(r(t),\theta(t))$. Then $\gamma$ satisfies the
Euler-Lagrange equations for $L$ if and only if \[
\ddot{r}=\dot{\theta}(r\dot{\theta}-f'(r)),\ \ \ r^{2}\dot{\theta}-f(r)=\mbox{const.}\]
Suppose $\gamma:\mathbb{R}/T\mathbb{Z}\rightarrow D$ is a curve such
that $(\gamma,\dot{\gamma})$ is a closed orbit of the Euler-Lagrange
flow of $L$ contained in $E^{-1}(k)$. Let $(\gamma_{s}:\mathbb{R}/T\mathbb{Z}\rightarrow D)_{s\in(-\varepsilon,\varepsilon)}$
denote a variation along $\gamma$ such that $(\gamma_{s},\dot{\gamma}_{s})\subseteq E^{-1}(k)$
for each $s\in(-\varepsilon,\varepsilon)$, and denote by $\zeta=\frac{\partial}{\partial s}\bigl|_{s=0}\gamma_{s}$
the associated Jacobi field. Since $\{\dot{\gamma}(t),\mathtt{i}\dot{\gamma}(t)\}$
constitutes an orthogonal frame along $\gamma$, there exist unique
functions $x,y:\mathbb{R}/T\mathbb{Z}\rightarrow\mathbb{R}$ such
that \[
\zeta(t)=x(t)\dot{\gamma}(t)+y(t)\mathtt{i}\dot{\gamma}.\]
Set \[
K(\sigma):=\left\langle \nabla F,\mathtt{i}\dot{\gamma}\right\rangle +F^{2}.\]
Then the functions $x$ and $y$ satisfy the \textbf{Jacobi equations}
\[
\dot{x}+Fy=0.\]
\[
\ddot{y}+K(\sigma)y=0\]
(see for instance \cite[p495]{PaternainPaternain1997a}). Provided
$K(\sigma)>0$, we claim that if \[
\frac{2\pi}{T\sqrt{K(\sigma)}}\notin\mathbb{Q}\]
then the orbit $(\gamma,\dot{\gamma})$ is weakly non-degenerate,
that is, the space of periodic Jacobi fields (with initial conditions
tangent to the energy level) along $\gamma$ is $1$-dimensional,
spanned by $\dot{\gamma}$. Indeed, if a Jacobi field $\zeta=x\dot{\gamma}+y\mathtt{i}\dot{\gamma}$
existed with $y$ not identically zero then the period of $y$ must
be commensurable with $T$, which is equivalent to asking that $\frac{2\pi}{T\sqrt{K(\sigma)}}\in\mathbb{Q}$.
(This latter condition will imply in fact that all iterates of $\gamma$
are non-degenerate.)

Now let us suppose that \[
\gamma_{k}(t)=(r_{k}(t),\theta_{k}(t)):\mathbb{R}/T(k)\mathbb{Z}\rightarrow D\]
 is a loop in $D$ such that $(\gamma_{k},\dot{\gamma}_{k})$ is an
orbit of the Euler-Lagrange flow of $L$ contained in $E^{-1}(k)$.
Let us suppose moreover that \[
r_{k}(t)\equiv\rho(k),\ \ \ \theta_{k}(t)=a(k)t\]
 for some constants $a(k)>0$ and $\rho(k)\in(0,4)$.

For such a curve $\gamma_{k}$ to be an orbit we need $\rho(k)a(k)-f'(\rho(k))=0$,
and in order to have energy $k$ we need $\rho(k)^{2}a(k)^{2}=2k$.
Thus \begin{equation}
\sqrt{2k}=\rho(k)a(k)=f'(\rho(k)).\label{eq:conditions on k}\end{equation}

Thus \[
T(k)=\frac{2\pi}{a(k)}=\frac{2\pi\rho(k)}{\sqrt{2k}}=\frac{2\pi\rho(k)}{f'(\rho(k))}.\]
 With this choice of $\gamma_{k}$ we have $\dot{\gamma}_{k}=(0,a(k))$
and $\left|\dot{\gamma}_{k}\right|=\rho(k)a(k)$. Thus $\mathtt{i}\dot{\gamma}_{k}=(-\rho(k)a(k),0)$,
and thus \[
K(\sigma)=-\rho(k)a(k)F'(\rho(k))+F(\rho(k))^{2}.\]
 Substituting $F=-f'(r)/r$ and simplifying we obtain \[
K(\sigma)=a(k)f''(\rho(k)).\]
 Let us now focus on $k=1/2$ and choose as our initial condition
to define $\gamma$ that \[
a(1/2)=1/2.\]
 Then \eqref{eq:conditions on k} implies that \[
\rho(1/2)=2,\ \ \ f'(\rho(1/2))=f'(2)=1.\]
 Then $\sqrt{K(\sigma)}=\sqrt{\frac{f''(2)}{2}}$, and thus provided
$f''(2)>0$, we see that in order for $(\gamma,\dot{\gamma})$ to
be weakly non-degenerate suffices to have \[
\frac{2\pi}{T(k)\sqrt{K(\sigma)}}=\frac{2\pi}{4\pi\sqrt{\frac{f''(2)}{2}}}=\frac{1}{\sqrt{2f''(2)}}\notin\mathbb{Q}.\]
 If in addition $T'(1/2)\neq0$, then the orbit will also be strongly
non-degenerate. Since $T(k)=\frac{2\pi\rho(k)}{f'(\rho(k))}$, we
compute that\[
T'(k)=\frac{f'(\rho(k))2\pi\rho'(k)-2\pi\rho(k)f''(\rho(k))\rho'(k)}{f'(\rho(k))^{2}},\]
 and hence \[
T'(1/2)=2\pi\rho'(1/2)(1-2f''(2)).\]
 Note also from \eqref{eq:conditions on k} we have \[
\frac{1}{\sqrt{k}}=f''(\rho(k))\rho'(k),\]
 and hence if $0<f''(2)<1/2$ then we have $\rho'(1/2)>0$, and hence
also $T'(1/2)>0$.

Since we have chosen $f$ such that $f$ vanishes outside $\{1<r<3\}$,
we can embed $D$ as a subset of $S^{2}$ and find a $1$-form $\psi$
on $S^{2}$ such that $\psi$ coincides with $-fd\theta$ on $D$
and vanishes on $S^{2}\backslash D$. Let $g$ denote a metric on
$S^{2}$ that restricts to $D$ to define the standard flat metric,
and let $H:T^{*}S^{2}\rightarrow\mathbb{R}$ be defined by $H(q,p)=\frac{1}{2}\left|p\right|_{g}^{2}$.
Set $\Sigma_{k}:=H^{-1}(k)$. The Ma\~n\'e critical value $c=c_{0}$
of $H$ can be estimated by \[
c\leq\sup_{q\in S^{2}}\frac{1}{2}\left|\psi_{q}\right|^{2}=\sup_{r\in(1,3)}\frac{1}{2}\frac{\left|f(r)\right|^{2}}{r^{2}}\leq\frac{1}{2}\sup_{r\in(1,3)}\left|f(r)\right|^{2}\]
(see equations \eqref{eq:mcv1} and \eqref{eq:mcv2}). Putting this
together, suppose we choose our function $f:(0,\infty)\rightarrow\mathbb{R}$
such that: 
\begin{enumerate}
\item $f\equiv0$ on $(0,\infty)\backslash(1,3)$ and $0\leq f(r)<1$ for
all $r\in(0,\infty)$; 
\item $f'(2)=1$; 
\item $0<f''(2)<1/2$ and $1/\sqrt{2f''(2)}\notin\mathbb{Q}$. 
\end{enumerate}
Such functions $f$ clearly exist. Let $\omega:=dp\wedge dq+\pi^{*}(d\psi)$,
and write as usual $X_{H}$ for the symplectic gradient of $H$ with
respect to $\omega$. With this choice of $f$, we have shown that: 
\begin{enumerate}
\item $c<1/2$. 
\item There exists a closed non-degenerate orbit $y:\mathbb{R}/T\mathbb{Z}\rightarrow\Sigma_{1/2}$
of $X_{H}$ with $\chi(y)\overset{\textrm{def}}{=}\mbox{sign}(-T'(1/2))<0$. 
\end{enumerate}
Finally if we consider a new orbit $\alpha$ with initial condition
$a(1/2)=2/5$ then a similar computation shows that if in addition
we ask that $f'(5/2)=1$, $f''(5/2)>2/5$ and $\sqrt{2/5f''(5/2)}\notin\mathbb{Q}$
then the corresponding orbit $z$ of $X_{H}$ has $\chi(z)>0$.

\bibliographystyle{amsplain}
\bibliography{C:/Users/Will/Desktop/willbibtex}

\end{document}